\documentclass[10pt]{article}
\usepackage{amsfonts,amssymb,pdfpages, xcolor}

 \usepackage{hyperref}
 
 \usepackage{amsmath}
\usepackage{amssymb}
\usepackage{indentfirst}
\usepackage{geometry}
\usepackage{amsfonts}
\usepackage{hyperref}

\setcounter{MaxMatrixCols}{10}
\setcounter{MaxMatrixCols}{10}

\newtheorem{theorem}{Theorem}[section] 

\newtheorem{lemma}[theorem]{Lemma}

\newtheorem{corollary}[theorem]{Corollary}

\newtheorem{definition}[theorem]{Definition}
\newtheorem{example}[theorem]{Example}

\setcounter{equation}{0}






\newcommand{\qed}{\enspace\vrule  height6pt  width4pt  depth2pt}
\newenvironment{proof}{\par\noindent{\bf Proof.}}{$\qed$\par\bigskip}

\begin{document}

\title{Generalized Differential  Geometry\footnote{  Mathematics subject Classification:  Primary
[$30F40$, $20M25$]; Secondary [$20H10$,  $20N05$,  $17D05$,  $16S34$].   Keywords and phrases:  generalized, fixed point, support, manifold,  geometry.  }} 
  
\author{   Juriaans, S. O.   \and  Queiroz, P.C.\footnote{IME-USP and UFMA, ostanley@usp.br, paulo.queiroz@ufma.br,    S\~{a}o Paulo-Brazil.}}

\date{}

\maketitle

\begin{abstract}Generalized Functions play a central role in the understanding of differential equations containing singularities and nonlinearities.  Introducing infinitesimals and infinities to deal with these obstructions leads  to controversies concerning  the existence, rigor and the amount of non-standard analysis needed to understand these theories.    Milieus constructed over the generalized reals sidestep them  all. A Riemannian manifold $M$ embeds   discretely into  a generalized manifold $M^*$ on which singularities vanish and products  of  nonlinearities make  sense.    Linking this to  an already existing global theory provides   an algebra  embedding $\kappa  :\hat{{\cal{G}}}(M)\longrightarrow {\cal{C}}^{\infty}(M^*,\widetilde{\mathbb{R}}_f)$.  Generalized Space-Time is constructed and its possible effects on Classical Space-Time are examined.  \end{abstract}

\section{Introduction} 

One of the main problems of mathematics is to construct environments in which equations have solutions.  They  need to  be constructed together with a set of mathematical tools that facilitates the constructions of  solutions and guarantee a way of obtaining  classical solutions if they do exists.  For differential equations involving singularities and nonlinearities  the construction of such milieus involves   the introduction  of infinitesimals and infinities and hence the endeavor   is not only  challenging but also leads to misconceptions about the amount of non-standard analysis one must master.

This does not need to be the case at all!    For example,  in \cite{Gio1, Gio2, Gio4}   such an endeavor is  undertaken. An environment,  whose basic  underlying structure is  $^{\bullet}\mathbb{R}$, the Fermat reals,  a totally ordered topological ring containing  infinitesimals,  is constructed in which nearly all classical features of Newtonian Calculus hold. Basically,  $^{\bullet}\mathbb{R}$ is the union of the halos of  elements  $^{\circ}x\in \mathbb{R}$  with  $halo(^{\circ}  x)=\{ ^{\circ} x+dt_a :\ a\in \mathbb{R}, a\geq 1\}$ and each $dt_a$ is a nilpotent element of order $min\{p\in \mathbb{N}\ :\ \frac{a}{p}<1\}$. In particular, $Inv(^{\bullet}\mathbb{R})$,  the group of invertible elements of $^{\bullet}\mathbb{R}$,   consists  of  the non-infinitesimals and hence,   $Inv(^{\bullet}\mathbb{R})$ is a group which is open but not dense in $^{\bullet}\mathbb{R}$.   Moreover, $\mathbb{R}$ is discrete in $^{\bullet}\mathbb{R}$ and,   because of its structure, $^{\bullet}\mathbb{R}$  contains no infinities. This should pose  some limitation on the Calculus over $^{\bullet}\mathbb{R}$   when dealing with problems involving certain singularities and nonlinearities both being   very common in Physics, Fluid Dynamics  and other areas. In such areas, infinitesimals and infinities have to coexist.    Hence  any milieu  aiming to deal with physical reality and  in which infinitesimals and infinities do coexist,  some of the latter and the former must be invertible elements. The  following   questions then arises: Can  such an environment  be constructed?    And  if so,   what is the amount of non-standard analysis  needed to master the basics of the construction?  If it does exists, how does the Calculus constructed in this environment  compare to the already familiar and so successful Newtonian Calculus and Schwartz's premier work on generalized functions? A reasonable way or tools to obtain classical solutions from the generalized solutions  must also be given.

In this paper we present     environments $\widetilde{\mathbb{R}}^n$ whose underlying structure is an ultrametric non-Archimedean   partially ordered  ring $\widetilde{\mathbb{R}}$ containing $\mathbb{R}$. Hence, our proposal is in the realm of non-Archimedean function theory born in 1961 in a Harvard seminar of J. Tate. It  is a specialized and advanced field of mathematics, requiring a solid foundation in mathematical analysis and  familiarity with nonstandard analysis. Tate's  remarkable discoveries led to a theory  which made possible analytic continuation over totally disconnected ground fields. The original context of the construction of  $\widetilde{\mathbb{R}}$  can be found in \cite{aragona1991intrinsic, Colom3,  Ober}, but these original sources do not contain  any of  the algebraic and topological ingredients which now do exists. 

To understand the construction of the non-Archimedean milieus  that   we propose, only working  knowledge in  Algebra,  Calculus and Distribution Theory  is needed. In spite of the nonstandard flavor, absolutely no familiarity with nonstandard analysis is required.  The ring  $\widetilde{\mathbb{R}}$, coined the {\it generalized reals},   has very nice properties some of which we enumerate:  it is a partially ordered ultrametric ring,   has   no nilpotent elements and its group of invertible elements,   $Inv(\widetilde{\mathbb{R}})$,  is open and dense. Given  $x\in \widetilde{\mathbb{R}}-Inv(\widetilde{\mathbb{R}})$,   there exist   $f, e\in {\cal{B}}(\widetilde{\mathbb{R}})$, the Boolean algebra of idempotents  of $\widetilde{\mathbb{R}}$, such that $e\cdot x=0$ and $f\cdot x\in Inv(f\cdot\widetilde{\mathbb{R}})$. Moreover,  $\mathbb{R}$  embeds  into $\overline{B}_1(0)\subset \widetilde{\mathbb{R}}$   as a grid of equidistant  points.  The  unit ball $B_1(0)$ consists of infinitesimals, as expected,  and $\widetilde{\mathbb{R}}$ contains an  isomorphic copy of the group $(\mathbb{R},+)$ which we denote by $\{\alpha_r\ : \alpha_r\cdot\alpha_s=\alpha_{r+s}, \ r\in \mathbb{R}\}$  all of which, except for  $1=\alpha_0$, are either infinities or infinitesimals. In   a subset of $\widetilde{\mathbb{R}}$,  Dirac's infinity, the delta Dirac function, becomes a differentiable function such that  $\delta(0)=\alpha_{-1}$, an infinity, and the zero distribution   $f(x)=x\delta$  becomes  a non-zero   differentiable function  with $f(0)=0$.   Also, differently than in \cite{Gio4}, the existence of primitives follows easily. So the milieus  we referred to  do exist and the Calculus developed  extends  in a very natural way classical  calculus and distribution theory. In this framework    theories of generalized functions  are brought back  to   Newton's  most fundamental insight:   the notion  of variation;  the single most important idea in Science and the essence of Nature.    One of the mathematical tools  to obtain a classical solution from a solution obtained in these generalized environments is the tool of support which can be defined in any such environment. It naturally extends the notion of association  or shadow from distribution theory.

The layout of the paper is thought out  to first construct these environments in a simple and clear way so that the reader can get acquainted with  its  construction. For this,  we first  construct what are known as the simple environments.  After this,  we switch gear and look at what are known as the full environments with underlying basic  structure  $\widetilde{\mathbb{R}}_f$. The latter  are needed  if one looks for   a canonical embedding of  ${\cal{D}}^{\prime}(\Omega)$. It is in them  that we prove a fixed point theorem and construct a generalized differential manifold from a classical Riemannian manifold. In particular, we obtain  Generalized Space-Time, a generalized version of Classical Space-Time in which infinities and infinitesimals naturally coexists and which contains Classical Space-Time as a discrete and bounded grid of equidistant points.

In the next section,  we explain the construction of the generalized reals  $\widetilde{\mathbb{R}}$  and give its main  properties.  In the third section,  we introduce functions defined on subsets of $\widetilde{\mathbb{R}}^n$  and show how to embed classical functions in a   natural way into  these new algebras. In the fourth section,  we   define  the notion of differentiability  being  a natural extension of Newton's Calculus.  In the fifth  section   new ideas and machinery   are proposed, constructed and used. As an application we prove a Fixed Point Theorem and the {\it Down Sequencing Argument}.

\begin{theorem}{\bf{[Fixed Point Theorem]}}

Let $\Omega\subset\mathbb R^N$, $A=[(A_\varphi)_\varphi]\subset B_{r_0}(0)\cap\mathcal G_f(\Omega)$, $r_0<2$, be an internal set, and $T:A\rightarrow A$ be given with representative $(T_\varphi:A_\varphi\rightarrow A_\varphi)_{\varphi\in {\cal{A}}_0}$. If there exists $k=[(k_\varphi)_\varphi]\in\widetilde{\mathbb N}$ such that   $T^k=(T_\varphi^{k_\varphi})$ is a $\lambda$-contraction, then $T$ is well defined,     continuous and has a unique fixed point in $  A$.

\end{theorem}

\begin{theorem}{\bf{[Down Sequencing Argument]}}
Let $f\in\mathcal G_f(\Omega)$,    $f\in W^0_{l,4^kr}[0]$ with $r>0$.  Then $f\in W^{k}_{l,r}[0]$, i.e.,  $W^0_{l,4^kr}[0]\subset W^{k}_{l,r}[0]$.
\end{theorem}

 \noindent In section six,  we continue in the full version of these milieus and show how to construct a generalized manifold $M^*$  from a classical Riemannian  manifold $M$ and prove an embedding result extending  results of \cite{aragona2005discontinuous} by linking them  to  results obtained in \cite{gkos4, invariant}. The construction of $M^*$ involves several   developments in the field.  Consequently, we extend some  former results,  obtained for open subsets of $\mathbb{R}^n$,  to abstract manifolds.

\begin{theorem} {\bf{[Embedding Theorem]}} Let $M$ be an $n-$dimensional orientable  Riemannian manifold. There exists an $n-$dimensional ${\cal{G}}_f-$manifold $M^*$  and an algebra monomorphism 

$$\kappa  :\hat{{\cal{G}}}(M)\longrightarrow {\cal{C}}^{\infty}(M^*,\widetilde{\mathbb{R}}_f)$$

\noindent which commutes with derivation. Moreover, $ssupp(M^*)=\overline{M}$, the topological closure of $M$,   and  equations defined on $M$, whose data have singularities or nonlinearities,    naturally extend to equations on $M^*$   where these data become $\ {\cal{C}}^{\infty}-$functions.

\end{theorem}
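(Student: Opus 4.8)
The plan is to build $M^*$ from an oriented atlas of $M$, lift to the manifold level the local embedding of $\mathcal{G}_f(\Omega)$ for open $\Omega\subset\mathbb{R}^n$ that this theorem generalizes, and then glue using the sheaf structure of $\hat{\mathcal{G}}(M)$. Fix an oriented atlas $\{(U_\alpha,\psi_\alpha)\}_{\alpha\in I}$ of $M$ with $\psi_\alpha(U_\alpha)=\Omega_\alpha\subset\mathbb{R}^n$ open. Using the grid embedding $\mathbb{R}^n\hookrightarrow\widetilde{\mathbb{R}}_f^n$ of Section 2 and the assignment $\Omega\mapsto\Omega^*$ of a generalized thickening to each open $\Omega$, I would set $\Omega_\alpha^{*}:=(\Omega_\alpha)^{*}$ and take as transition maps of $M^*$ the images $\Psi_{\beta\alpha}:=\iota(\psi_\beta\circ\psi_\alpha^{-1})$ of the classical transition maps under the embedding $\iota$ of smooth maps into $\mathcal{G}_f$-maps from Sections 3--4. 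The first thing to verify is the cocycle identity $\Psi_{\gamma\beta}\circ\Psi_{\beta\alpha}=\Psi_{\gamma\alpha}$ on the relevant internal overlaps, which should follow from compatibility of $\iota$ with composition and of the grid embedding with restriction. Orientability enters here to keep all transition Jacobians in one orientation class, so that a global generalized volume form --- needed below for the derivation and support statements --- can be patched together; the outcome is $M^*$ as an $n$-dimensional $\mathcal{G}_f$-manifold containing $M$ as the disjoint union of the grids of the $\Omega_\alpha$, hence as a discrete bounded subset.

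To define $\kappa$, recall that $u\in\hat{\mathcal{G}}(M)$ is a coherent family $(u_\alpha)_\alpha$ with $u_\alpha\in\mathcal{G}_f(\Omega_\alpha)$ and $u_\beta=(\psi_\alpha\circ\psi_\beta^{-1})^{*}u_\alpha$ on overlaps, and that the local embedding $\kappa_\alpha:\mathcal{G}_f(\Omega_\alpha)\hookrightarrow\mathcal{C}^\infty(\Omega_\alpha^{*},\widetilde{\mathbb{R}}_f)$ is an algebra monomorphism commuting with partial derivatives. I would then prove the overlap compatibility $\kappa_\beta(u_\beta)=\Psi_{\alpha\beta}^{*}\kappa_\alpha(u_\alpha)$; this reduces to an intertwining of the form $\kappa\circ(\text{classical pullback})=(\iota\text{-pullback})\circ\kappa$, which I would establish by first checking it on the discrete grid $\mathbb{R}^n\cap\Omega$, where everything is classical, and then extending by density of $\mathrm{Inv}(\widetilde{\mathbb{R}})$ together with continuity. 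Granting this, the $\kappa_\alpha(u_\alpha)$ glue to a well-defined $\kappa(u)\in\mathcal{C}^\infty(M^*,\widetilde{\mathbb{R}}_f)$; that $\kappa$ is an algebra homomorphism and commutes with covariant differentiation then follows chartwise from the corresponding properties of the $\kappa_\alpha$, using that the Christoffel symbols of the Levi-Civita connection of $M$ are classical smooth functions on the $\Omega_\alpha$ whose $\kappa_\alpha$-images are again smooth, so the covariant-derivative formula transports verbatim.

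Injectivity is the heart of the matter. If $\kappa(u)=0$, then every chartwise restriction $\kappa_\alpha(u_\alpha)$ vanishes, so $u_\alpha=0$ by injectivity of the local embeddings, and therefore $u=0$ by the sheaf property of $\hat{\mathcal{G}}(M)$. The delicate point --- and the step I expect to be the \emph{main obstacle} --- is the bookkeeping that the restriction of $\kappa(u)$ to the chart $\Omega_\alpha^{*}$ genuinely coincides with $\kappa_\alpha$ applied to the restriction of $u$, i.e.\ that the colimit defining $M^*$ and the colimit defining $\hat{\mathcal{G}}(M)$ are formed compatibly. Since the naive special-algebra pullback is not strictly functorial, this is precisely where the diffeomorphism-invariant framework of \cite{invariant} must be invoked and linked, as promised, to \cite{gkos4} and \cite{aragona2005discontinuous}: the correction terms obstructing functoriality have either to be absorbed by the invariant construction or shown to be negligible with respect to the equivalence defining $\widetilde{\mathbb{R}}_f$.

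For the last two assertions: because $M^*$ is the $B_1(0)$-thickening of the grid $M$, every point of $M^*$ has its shadow --- in the sense of the support operator of Section 5 --- lying in $M$, which gives $ssupp(M^*)\subseteq\overline{M}$, while every point of $\overline{M}$ is approximated by grid points of arbitrarily small infinitesimal defect, giving the reverse inclusion and hence $ssupp(M^*)=\overline{M}$. The transfer statement is then a formal corollary of the embedding: a datum $g$ on $M$ carrying singularities, or a nonlinear datum, embeds into $\hat{\mathcal{G}}(M)$, its image $\kappa(g)$ is by construction a genuine $\widetilde{\mathbb{R}}_f$-valued $\mathcal{C}^\infty$ function on $M^*$, products such as $\kappa(g)\kappa(h)=\kappa(gh)$ are legitimate, and when the operator produced by rewriting the equation on $M^*$ is a $\lambda$-contraction on an internal ball, the Fixed Point Theorem supplies a solution whose support recovers the classical one.
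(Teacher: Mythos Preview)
Your approach diverges substantially from the paper's. The paper does not build $M^*$ by gluing local thickenings $\Omega_\alpha^*$ along lifted transition maps; it works extrinsically. Using Whitney's embedding theorem, $M$ is realized as a submanifold of some $\mathbb{R}^N$, and $M^*$ is defined concretely as $\widetilde{M}_{c,f}\subset\widetilde{\mathbb{R}}^N_f$, the set of compactly supported generalized points with representatives in $M$. The charts of $M^*$ are not indexed by the classical index set $\Lambda$ but by $\widetilde{\Lambda}=\mathcal{F}(\mathcal{A}_0,\Lambda)$: a chart $U_\lambda$ is the strong internal set $\langle U_{\lambda_\varphi}\rangle$ attached to a \emph{net} $(\lambda_\varphi)$ of classical chart indices, and only $\lambda$ of finite range are retained. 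This is not cosmetic: a generalized point $p$ whose essential support $ssupp(p)$ meets several classical charts cannot lie in any single principal chart $(\widetilde{U_\alpha})_c$, so your colimit of the $\Omega_\alpha^*$ over classical $\alpha$ would fail to cover the paper's $M^*$. Having constructed $M^*$ this way and defined $\iota$ on ${\cal{C}}^\infty(M,\mathbb{R})$ via representatives, the paper does not carry out a gluing argument at all---it states that the proof of the theorem follows from this construction together with the results of \cite{gkos4,invariant} on $\hat{\mathcal{E}}_m(M)$, and moves on.

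There is also a genuine gap in your outline. Your proposed verification of the intertwining $\kappa_\beta(u_\beta)=\Psi_{\alpha\beta}^*\kappa_\alpha(u_\alpha)$ by first checking it on the discrete grid $\mathbb{R}^n\cap\Omega$ and then ``extending by density of $\mathrm{Inv}(\widetilde{\mathbb{R}})$ together with continuity'' cannot work: the paper insists repeatedly that $\mathbb{R}^n$ sits inside $\widetilde{\mathbb{R}}^n_f$ as a discrete grid of equidistant points, so it is nowhere dense in $\Omega_\alpha^*$, and the density of $Inv(\widetilde{\mathbb{R}})$ in $\widetilde{\mathbb{R}}$ is a different statement that does not rescue this. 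The correct mechanism---which you gesture toward but do not deploy---is that in the full setting the action of a diffeomorphism on $\mathcal{G}_f$ is functorial by construction, so the intertwining holds at the level of representatives without any limiting argument. Your instinct that this is where \cite{invariant} enters is right; the paper simply delegates that step to the reference rather than reproving it.
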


Finally, we   extend   Generalized Differential  Calculus to modules over the generalized reals  and the generalized complex numbers  in the full setting. The effects of Generalized Space-Time on Classical Space-Time is considered as is the effect of the existence of generalized solutions of differential equations on numerical solutions of these equations.  Absolutely no claim is made that Generalized Space-Time  corresponds  to physical reality.   References on which this introduction  is based are \cite{meril,  aragona1991intrinsic,  aragona2005discontinuous, aragona2009natural, hebe, bosh, Colom3, juriaans2022fixed,  inter, ku,  Ober, Schwartzlivre, wolfram}. Notation used  is standard.

\section{The underlying structure $ \widetilde{\mathbb{R}}$}

In this section we  construct    the ring $\widetilde{\mathbb{R}}$ which  is the basic underlying structure of the environments  to be created. Its construction is without the use of   nonstandard methods and  basically mimics the sequential construction of the reals starting from the rationals, the reason to coin it the {\it generalized reals}. It is  constructed  in such a way that whatever norm  used  in $\mathbb{R}^n$   results in the same  norm in $\widetilde{\mathbb{R}}^n$.

Let us  first explain the construction in terms of the sequential  construction of $\mathbb{R}$ starting from $\mathbb{Q}$,  but enlarging a bit the initial ring and  factoring out by an ideal so that  infinities and infinitesimals are introduced in the resulting quotient ring.  Our initial ring consists of   rational sequences of polynomial growth. There is a reasonable explanation for the why of this model:  this has to do with  the notion of computability which in turn relates to the notion of  time, thus  making   the model more in sync with   physical reality since infinities whose growths are bigger then polynomial growth should not exist in physical reality, but, on the other hand,  infinitesimals can be used to perform calculations of exponential growth.    Let's look at the math.  We consider rational sequences and the partial order $(x_n)\leq (y_n)$ if there exists $n_0\in \mathbb{N}$ such that $x_n\leq y_n,\ \forall\  n\geq n_0$. Consider the sequence $ \tau =(\frac{1}{n})$,  $\tau_k =\tau^k=(\frac{1}{n^k}), k\in \mathbb{Z}$ and the algebra ${\cal{A}}=\{ (x_n)_{n\in \mathbb{N}} \ :\ x_n \in \mathbb{Q} , \exists\  N\in \mathbb{N}, \ \mbox{such that}\ (|x_n|)\leq \tau_{-N}\}$. Then ${\cal{J}}=\{(x_n)\in {\cal{A}}\ :\  (|x_n|)\leq \tau_N,\ \forall\ N\in \mathbb{N} \}$ is a radical  ideal of ${\cal{A}}$. The image of the Cauchy sequences in the  quotient algebra $\widetilde{\cal{A}}={\cal{A}}/{\cal{J}}$ is  a copy of $\mathbb{R}$ and the image of the bounded sequences (compactly supported sequences)  is a subring containing the copy of $\mathbb{R}$. The nontrivial elements of the Boolean algebra of idempotents,  ${\cal{B}}(\widetilde{\cal{A}}) $,   are     characteristic functions of special subsets of $\mathbb{N}$ and the image of the powers of $\tau$ are infinities and infinitesimals. Infinitesimals have representatives  which converge to zero.   Given an  element $x$ with a bounded representative,  there exist  idempotent   $e\in {\cal{B}}(\widetilde{\cal{A}})$ and $x_e\in \mathbb{R}$ such that  $ex -ex_e$ is an infinitesimal. The element $ex$ is nothing more than a subsequence of $x$, i.e., multiplying an element  by an idempotent corresponds to taking a subsequence of that element and $x_e$ is an accumulation point of $x$.  For any element $x\in \widetilde{\cal{A}}$, define  $supp(x)=\{ x_e\in \mathbb{R}\ :\ ex -ex_e\ \mbox{is an infinitesimal}, e\in {\cal{B}}(\widetilde{\cal{A}})\}$. If the support consists of only one element $x_{e_0}$ we say that $x_{e_0}$ is the shadow of $x$, i.e., the sequence converges.  It is not hard to see that in this algebra elements are either units or zero divisors and that zero divisors have  idempotents in their  annihilators. The prime and  maximal  ideals are related to ultrafilters  in   ${\cal{B}}(\widetilde{\cal{A}})$. So it is the parameter space $\mathbb{N}$  that gives rise to idempotents, filters, the zero set of elements and how elements are interlinked. Compactification   is not needed.  The reader should have  this example in mind when reading the rest of this section.   Let's  now continue with the construction of $\widetilde{\mathbb{R}}$ which mimics what  we just did.  

 Let $I =]0,1]$ and ${\cal{E}}(\mathbb{R})={\cal{F}}(I,\mathbb{R})$ whose elements are called nets and denoted either by $(x_{\varepsilon})$ or  $(x_{\varepsilon})_{\varepsilon}$.  The algebra ${\cal{E}}(\mathbb{R})$ has  a natural partial order induced by the comparison of values of functions, more precisely: we say that $(x_{\varepsilon})\leq (y_{\varepsilon}) $ if there exists $\eta\in I$ such that $x_{\varepsilon}\leq y_{\varepsilon}, \forall \varepsilon\in I_{\eta}:=]0,\eta]$.  Given  a net $x=(x_{\varepsilon})$, we denote by $|x|$ the net $(|x_{\varepsilon}|)$.    We denote by $\alpha =(\varepsilon)_{\varepsilon}$,   coining it the {\it natural gauge} or {\it standard gauge}, and by $\alpha_r :=\alpha^r$, $r\in \mathbb{R}$. Since $\alpha_r\cdot \alpha_s=\alpha_{r+s}$,  it follows that the map $(\mathbb{R},+)\ni  r\longrightarrow \alpha_r \in {\cal{E}}(\mathbb{R})$ is a group monomorphism. This fundamental gauge will be key in all the constructions. All growth will be measured by  this gauge,  resulting in the rejection of certain infinities and the avoidance of some infinitesimals.  At the end of this section we shall see that it is related to the way we measure in Classical Analysis. At first, one could think of using other gauges, but, just like in the case of the Principle of Computational Equivalence, one should look for the gauge that permits interpretation of physical reality without overcomplicating.

An element   $x \in {\cal{E}}(\mathbb{R})$ is moderate if $|x| <\alpha^r$, for some $r\in \mathbb{R}$.    Denote the set of moderate nets  by ${\cal{E}}_M$ and  by ${\cal{I}}=\{ x : x\in {\cal{E}}_M, \ |x|<\alpha^n, \forall\  n \in \mathbb{N}\}$. For  $x\in {\cal{E}}_M$,  denote by $V(x)=Sup\{r\in \mathbb{R} : |x|<\alpha^r\}$ and set $\|x\| =e^{-V(x)}$.  It is easily seen that  $\cal{I}$ is a radical  ideal of the ring  ${\cal{E}}_M$ and setting  $\widetilde{\mathbb{R}} : =\frac{{\cal{E}}_M}{\cal{I}}$, we have that $(\widetilde{\mathbb{R}} , \| \ \|)$ is a partially ordered  ultra-metric     ring containing a copy of $\mathbb{R}$. It is common to denote elements of ${\cal{E}}_M$ by $\hat{x}$ and its class in $\widetilde{\mathbb{R}} $ by $x$ and call $\hat{x}$ a representative of $x$.  We shall call $\widetilde{\mathbb{R}} $ the ring of {\it generalized reals} (or {\it Colombeau reals}) and, to understand it better,  one should think in  terms of germs at $0$, although $0\notin I$ (see \cite{Colom3, Ober}).    The set of invertible elements of $\widetilde{\mathbb{R}} $ is denoted by $Inv(\widetilde{\mathbb{R}})$ and its boolean algebra of idempotents   is denoted by  ${\cal{B}}(\widetilde{\mathbb{R}})=\{e\in\widetilde{\mathbb{R}}\ :\  e^2=e\}$. The following theorem summarizes all basic properties one needs to know about   $\widetilde{\mathbb{R}}$. Just as one rarely uses the way $\mathbb{R}$ is constructed from $\mathbb{Q}$, in general, one does not need to remember the construction of $\widetilde{\mathbb{R}}$. Most proofs can be done intrinsically without appealing to representatives. 

\begin{theorem} {\bf{[The Extended Fundamental Theorem of $\ \widetilde{\mathbb{R}}$]}} $ \ $
\begin{enumerate}
\item $\widetilde{\mathbb{R}}$ is a partially ordered non-Archimedean  ultrametric  algebra such that $\|x\cdot y\|\leq \|x\|\cdot \|y\|$. In particular, $\|x+y\|\leq max\{\|x\|,\|y\|\}$,  $\|rx\|=\|x\|$, $r\in\mathbb{R}^*$  and $\widetilde{\mathbb{R}}$ is a totally disconnected topological ring.
\item   $Inv(\widetilde{\mathbb{R}})$ is open and dense in $\widetilde{\mathbb{R}}$. In particular, maximal ideals are closed and rare.
\item  $x\in Inv(\widetilde{\mathbb{R}})$ if and only if $|x|\geq \alpha_r$, for some $r\in \mathbb{R}$.
\item  ${\cal{B}}(\widetilde{\mathbb{R}})=\{\chi_S \ :\  S\subset {\cal{S}}\}$, where $\chi_S$ is the characteristic function of $S$ and $S\in {\cal{S}}$ if and only if $0$ is in the topological closure, in $\mathbb{R}$,  of both  $S$ and $S^c=I-S$.  
\item $x\in \widetilde{\mathbb{R}}-Inv(\widetilde{\mathbb{R}})$  if and only if there exists an idempotent $e\in {\cal{B}}(\widetilde{\mathbb{R}})-\{0,1\}$ such that $e\cdot x=0$.
\item The Jacobson Radical of $\ \widetilde{\mathbb{R}}$ is $\{0\}$. In particular, $\widetilde{\mathbb{R}}$ has no nontrivial nilpotent elements and is embeddable into  a product of integral domains. 
\item The unit ball $B_1(0)=\{x\in \widetilde{\mathbb{R}}\ :\ \|x\|<1\}$ consists of infinitesimals, i.e., if $x\in B_1(0)$ then  $|x| < \frac{1}{n}, \ \forall \ n\in \mathbb{N}$. In particular, $\alpha_r\in B_1(0)$, $\forall\  r>0$.
\item For each $r<0$, $\alpha_r$ is an infinity, i.e., $\alpha_r > n,\ \forall n\in \mathbb{N}$.
\item If $x\in \widetilde{\mathbb{R}}-\{0\}$ then there  exists $e\in  {\cal{B}}(\widetilde{\mathbb{R}})$ such that $e\cdot x\in Inv(e\cdot  {\widetilde{\mathbb{R}}})$.
\item The  {\it Biagioni-Oberguggenberger topology} of $\ \widetilde{\mathbb{R}}$,  the  {\it sharp topology},  is generated by the  balls with generalized radii $V_r(0)=\{x\in \widetilde{\mathbb{R}}\ :\  |x| <  \alpha_r\}$.  

\end{enumerate}
\end{theorem}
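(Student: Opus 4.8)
The plan is to prove all ten assertions by passing to representative nets and exploiting a single structural principle: for a class $x=[(x_\varepsilon)_\varepsilon]$ and an idempotent $\chi_S\in\mathcal{B}(\widetilde{\mathbb{R}})$, the product $\chi_S\cdot x$ is the net $(x_\varepsilon)_\varepsilon$ restricted to the parameter set $S$ (and zeroed off $S$); thus idempotents encode how the ``vanishing locus at $0$'' of an element is organized, while invertibility is governed by a lower bound of the form $|x|\ge\alpha_r$. I would first dispose of assertion (1): the valuation $V$ of the text satisfies $V(xy)\ge V(x)+V(y)$ and $V(x+y)\ge\min\{V(x),V(y)\}$ straight from the definition, descends to $\widetilde{\mathbb{R}}$ since elements of $\mathcal{I}$ have $V=+\infty$, yields $\|rx\|=\|x\|$ for $r\in\mathbb{R}^*$ because $V(r)=0$, and makes the ring operations continuous by submultiplicativity of $\|\cdot\|$; total disconnectedness is automatic, as balls in an ultrametric space are clopen.

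The technical core is the invertibility--idempotent block, assertions (3), (4), (5) and (9). For (3): if $|x_\varepsilon|\ge\varepsilon^r$ for small $\varepsilon$ then a representative is eventually nonzero with moderate pointwise reciprocal; conversely $xy=1$ forces $|x_\varepsilon y_\varepsilon|\ge\tfrac12$ eventually, hence $|x_\varepsilon|$ exceeds a power of $\varepsilon$. For (4), an idempotent representative $(e_\varepsilon)_\varepsilon$ has $\bigl(e_\varepsilon(e_\varepsilon-1)\bigr)_\varepsilon$ negligible, so at each $\varepsilon$ one factor has modulus at most $|e_\varepsilon(e_\varepsilon-1)|^{1/2}$; rounding $e_\varepsilon$ to $0$ or $1$ accordingly alters it negligibly, so every idempotent has a $\{0,1\}$-valued representative $\chi_S$, and $\chi_S$ is nontrivial precisely when $0\in\overline{S}\cap\overline{S^c}$. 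For (5) and (9), given $x\neq 0$ I would split the parameter interval according to the size of $|x_\varepsilon|$: on the part where $|x_\varepsilon|\ge\varepsilon^m$ for a fixed $m$, after passing to a set $S$ accumulating at $0$ one obtains a class $\chi_S x$ invertible on its support (this is (9), with $e=1$ when $x$ itself is invertible); over the complementary behaviour a diagonal choice $\eta_1>\eta_2>\cdots\to 0$ produces a set $T$ on which $|x_\varepsilon|$ is dominated by $\varepsilon^n$ with $n\to\infty$, so that $\chi_T\cdot x=0$ with $\chi_T$ nontrivial (this is (5)).

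The remaining assertions follow quickly. For (2): $1+u$ is invertible whenever $\|u\|<1$ (its reciprocal net is moderate), so $Inv(\widetilde{\mathbb{R}})$ is open, and density follows by the standard perturbation — replace $x_\varepsilon$ by $\varepsilon^n$ wherever $|x_\varepsilon|<\varepsilon^n$ to get an invertible class within $e^{-n}$ of $x$ by (3); a maximal ideal, being disjoint from the dense open $Inv(\widetilde{\mathbb{R}})$, is then nowhere dense, and its closure is a proper ideal, hence equals it. For (6): nontrivial idempotents are never invertible, so by (9) every nonzero $y$ admits $e\in\mathcal{B}(\widetilde{\mathbb{R}})$ and $z$ with $yz=e$, whence $1-yz=1-e$ is non-invertible and $y$ is not in the Jacobson radical, giving $J(\widetilde{\mathbb{R}})=\{0\}$; the same local invertibility shows $\widetilde{\mathbb{R}}$ is reduced, and a reduced commutative ring embeds diagonally into $\prod_{\mathfrak{p}}\widetilde{\mathbb{R}}/\mathfrak{p}$, a product of integral domains. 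Assertions (7) and (8) are immediate from $\alpha_r=(\varepsilon^r)_\varepsilon$ (for $r>0$, $\varepsilon^r$ eventually beats every $1/n$; for $r<0$, it eventually exceeds every $n$), and (10) merely records that the $\|\cdot\|$-metric topology has the sets $\{x:|x|<\alpha_s\}$, $s\in\mathbb{R}$, as a neighbourhood subbasis at $0$ — since $\|x\|<e^{-s}$ iff $|x|<\alpha_t$ for some $t>s$ — so that their translates generate the sharp topology.

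I expect the main obstacle to be the diagonal construction of the splitting idempotents in (5) and (9): one must arrange a single parameter set that annihilates $x$ (resp.\ leaves an invertible piece) while keeping it \emph{and} its complement accumulating at $0$, so that the resulting idempotent lies in $\mathcal{B}(\widetilde{\mathbb{R}})\setminus\{0,1\}$, and all of this must be done independently of the chosen representative. Once this combinatorial core is secured, the algebraic consequences and the topological statement are routine.
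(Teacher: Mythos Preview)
The paper does not supply its own proof of this theorem: it is stated as a summary of known structural facts about $\widetilde{\mathbb{R}}$, with the surrounding text referring the reader to \cite{aragona1991intrinsic, aragona2009natural, Colom3, juriaans2022fixed, ku, Nedel, scarpalezos2000colombeau, wolfram} for the details. There is therefore nothing in the paper to compare your argument against at the level of method.

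That said, your sketch follows the standard route taken in those references (notably \cite{AJ, aragona2013algebraic}): the valuation inequalities for (1); the characterisation of units via a lower bound $|x|\ge\alpha_r$ for (3); rounding an idempotent representative to $\{0,1\}$ via $\min(|e_\varepsilon|,|e_\varepsilon-1|)\le|e_\varepsilon(e_\varepsilon-1)|^{1/2}$ for (4); the diagonal extraction of a set $T$ accumulating at $0$ on which $|x_\varepsilon|<\varepsilon^k$ for (5); the complementary extraction for (9); and the perturbation $x_\varepsilon\mapsto\varepsilon^n$ on $\{|x_\varepsilon|<\varepsilon^n\}$ for density in (2). Your argument for (6) via $1-yz=1-e$ being a non-unit is correct, and the remaining items (7), (8), (10) are indeed direct. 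The only point to watch is exactly the one you flag: in (5) and (9) you must verify that the extracted set \emph{and} its complement both accumulate at $0$ and that the construction is independent of the representative, but both are routine (the set in (5) can be taken countable, so its complement is automatically co-accumulating; for (9), non-nullity of $x$ furnishes the required lower bound on a set accumulating at $0$).
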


The reals, $\ \mathbb{R}$,  are  embedded  as a grid of equidistant  points in the generalized reals  $\widetilde{\mathbb{R}}$, the common distance being equal to one (one can adjust the ultra-metric so that this common distance is at the scale of uncertainty in physical reality). The ideals of  $\widetilde{\mathbb{R}}$  are convex (an ideal $J$ is convex if $x\in J$,  $y\in\widetilde{\mathbb{R}}$ and $|y|\leq |x|$ implies that $y\in J$), its Krull dimension is infinite and it has a minimal prime  which is also a maximal ideal.  Ultrafilters of $\ {\cal{S}}$ partially parametrize prime and maximal ideals of $\ \widetilde{\mathbb{R}}$.        The halo of $x\in \widetilde{\mathbb{R}} $ is defined as $halo(x)=B_1(x)=x+B_1(0)$. We say that $x$   and $y$ are associated (denoted by $x\approx y$) if $x-y$ is an infinitesimal  or equivalently $\lim\limits_{\varepsilon\longrightarrow 0}(\hat{x}-\hat{y})(\varepsilon)=0$. In particular, if $y\in halo(x)$ then $y\approx x$.

The environments we propose are $\widetilde{\mathbb{R}}^n$ with $n\in \mathbb{N}$. Note that, in some sense, points  in these environments are not static  since one must think of them in terms of  germs  when $\varepsilon \downarrow 0$.  Since norms in $\mathbb{R}^n$ are all equivalent, they all induce the same topology in $\widetilde{\mathbb{R}}^n$.     We say that a point $p=(p_1,\cdots, p_n)\in \widetilde{\mathbb{R}}^n$ is {\it compactly supported} if there exists $L\in \mathbb{R}$ such that $\|p\|_1\leq L$, in $\widetilde{\mathbb{R}}$. We also call them {\it finite points}. The set of finite points is denoted by $\widetilde{\mathbb{R}}^n_c$ and is an $\widetilde{\mathbb{R}}_c-$ module. Thus, equipping $\widetilde{\mathbb{R}}^n$ with the product topology,  it follows that $\widetilde{\mathbb{R}}^n_c\subset \overline{B}_1(0)=\{x\in \widetilde{\mathbb{R}}^n\ :\  \|x\|\leq 1\}$. 

 Given $\Omega \subset \mathbb{R}^n$, define  $\widetilde{\Omega}:=\{p\in \widetilde{\mathbb{R}}^n\ :\ \exists \ \eta\in I,  \hat{p}=(p_{\varepsilon}),  p_{\varepsilon}\in\Omega, \ \mbox{for}\ \varepsilon\in I_{\eta}\}$        and  $\widetilde{\Omega}_c:=\{p\in \widetilde{\mathbb{R}}^n_c\ :\ \exists \ \eta\in I,  \hat{p}=(p_{\varepsilon}),  p_{\varepsilon}\in\Omega, \ \mbox{for}\ \varepsilon\in I_{\eta}\}=\widetilde{\Omega}\cap \widetilde{\mathbb{R}}^n_c$. Given a point $p\in \widetilde{\mathbb{R}}^n$ we define its {\it support} as $ supp(p)=\{q\in \mathbb{R}^n\ :\ \exists \ e\in  {\cal{B}}(\widetilde{\mathbb{R}}), \ \mbox{such that}\ e\cdot p\approx e\cdot q\}$.  For a subset $X\subset   \widetilde{\mathbb{R}}^n_c$ we define its support as $supp(X)=\bigcup\limits_{p\in X}supp(p)$.        If $\| p\|<1$ then $supp(p)=\{\vec{0}\}$, $supp(\alpha_{-1})=\emptyset$, $supp([sin(\alpha_{-1})])=[-1,1]$ and if $p\in  \widetilde{\mathbb{R}}^n_c$,  then $supp(p)\neq \emptyset$.  It is clear  that $supp(\widetilde{\Omega}_c)=\overline{\Omega}$, the topological closure of $\Omega$ in $\mathbb{R}^n$.     These   notions are   related to  {\it  interleaving}, {\it membranes} and {\it internal sets}  for which we refer the reader to \cite{juriaans2022fixed, oberguggenberger2008internal}.  The notion of support can be extended to operators acting on these environments and be used to obtain classical solutions, if they exist, for classical operators in the support of an operator in these environments: If $L$ is a generalized operator, $L_0\in supp(L)$, $F$ a generalized solution of the equation $L(\tau)=0$, then $L_0(f_0)=0$, for $f_0\in supp(F)$. In fact, there exists an idempotent $e\in {\cal{B}}(\widetilde{\mathbb{R}})$ such that $f_0=e\cdot F$.  So this serves as a mathematical tool to obtain classical solutions. This shows the advantage of ${\cal{B}}(\widetilde{\mathbb{R}})$ not to be trivial, which is not  the case when the underlying structure is a fields or the Fermat reals.
 
 We finish this section comparing  the classical topology in Analysis  with the sharp topology. This is best done looking at the definition of continuity.  Let $f:\mathbb{R}\longrightarrow\mathbb{R}$ be  continuous at $x_0$.  Classically  this means that for each $\varepsilon$ there exists $\delta_{\varepsilon}$  such that whenever $|x-x_0|\leq \delta_{\varepsilon}$ we have that $|f(x)-f(x_0)|\leq \varepsilon$  ($|x-x_0|\leq \delta_{\varepsilon} \Longrightarrow |f(x)-f(x_0)|\leq \varepsilon$).  Translated to the sharp topology this can be written as: $f$ is continuous at $x_0$  if given $\alpha$, there exists $\delta=[(\delta_{\varepsilon})]$ such that    $x\in V_{\delta}(x_0)\Longrightarrow f(x)\in V_1(f(x_0))$, where $V_{\delta}(x_0)=\{x\in \widetilde{\mathbb{R}}\ :\ |x-x_0|<\delta\}$.   This proves that  classically we stop measuring at scale $\alpha$, this being the reason why we coined  it our natural gauge.   References on which this section is based are \cite{aragona1991intrinsic, aragona2009natural, Colom3, juriaans2022fixed,  ku, Nedel, scarpalezos2000colombeau, wolfram}.

\section{Functions on $\widetilde{\Omega}_c$}

We  now define functions on subsets of $\widetilde{\mathbb{R}}_c^n$ in such a way that it naturally extends our classical definitions, but at the same time permits that we can see both classical functions and distributions as functions in these new environments, i.e., we are looking for a domain for these objects. More important, composition of   functions, if the classical conditions are satisfied,  must be  defined.   The way to achieve this is to use our natural gauge.

Let $\Omega \subset \mathbb{R}^n$ be an open subset and let  $(\Omega_m)$ be an exhaustion of compact subsets  of $\Omega$. Then   $(\widetilde{\Omega}_{mc})$, where $\widetilde{\Omega}_{mc}=(\widetilde{\Omega_m})_c$,  is an exhaustion of $\widetilde{\Omega}_c$  by   these  subsets, called {\it principle  membranes},  of  $\widetilde{\Omega}_c$.

Given     a net  $\hat{f}=(\hat{f}_{\varepsilon})\in {\cal{C}}^{\infty}(\Omega,\mathbb{R})$ and $  \hat{p}=(p_{\varepsilon})$,  consider the net $\hat{f}(\hat{p})=(\hat{f}_{\varepsilon}(p_{\varepsilon}))\in{\cal{E}}(\mathbb{R})$.  For  $\hat{q}=(q_{\varepsilon})$, we have that $|\hat{f}(\hat{p}) -\hat{f}(\hat{q})|\leq  \|\nabla\hat{f}(\hat{p}_1)\|_2\cdot \|\hat{p}-\hat{q}\|_2$, which will  make $\hat{f}$ a well defined function on $\widetilde{\Omega}_c$ if  imposed   a growth condition on both   $\hat{f}$ and  $\nabla\hat{f}$. 

  We say that $\hat{f}$ is $\alpha-${\it bounded} (or of {\it moderate growth})  if  for  each $m\in \mathbb{N}$ there exists $r=r(m)\in \mathbb{R}$ such $\{ \hat{f}(\hat{p}), \partial_{x_i}\hat{f}(\hat{p})  \ :\ i=1,n,\  \hat{p}\in  \Omega_m\} \subset V_{r(m)}(0)$.    It follows that if    $\hat{f}$  is $\alpha-${\it bounded}  then it defines a function $\hat{f}:\widetilde{\Omega}_c\longrightarrow  \widetilde{\mathbb{R}}$, $\hat{f}(p)=[\hat{f}(\hat{p})]$ and $\hat{f}(\widetilde{\Omega}_{mc})\subset V_{r(m)}(0)$.  Let  ${\cal{E}}( \Omega,\mathbb{R})=\{\hat{f}\in  {\cal{C}}^{\infty}(\Omega,\mathbb{R})\ :\ \hat{f}\ \mbox{is}\ \alpha-\mbox{bounded}\}$, i.e., we look at the set of all  nets which define functions  on $\widetilde{\Omega}_c$ taking values in $\widetilde{\mathbb{R}}$.    Clearly,   $J=\{\hat{f}\in {\cal{E}}( \Omega,\mathbb{R}) \ : \ \hat{f}=0\  \mbox{on}\ \widetilde{\Omega}_c\}$ is an ideal of $ {\cal{E}}( \Omega,\mathbb{R})$ and ${\cal{E}}( \Omega,\mathbb{R})/J  $ embeds into ${\cal{F}}(\widetilde{\Omega}_c,\widetilde{\mathbb{R}})$.  With the notation of the previous section,  one can write $J= \{\hat{f}\in {\cal{E}}( \Omega,\mathbb{R}) \ : \ \hat{f}(\hat{p})\in{\cal{I}}, \  \mbox{for all}\ \hat{p}\in \widetilde{\Omega}_c\}$.      For example, take $n=1$,   let $\varphi\in {\cal{D}}(\mathbb{R})$ and consider the net $\hat{f}=(\varphi_{\varepsilon})$, where $\varphi_{\varepsilon}(x)=\frac{1}{\varepsilon}\varphi(\frac{x}{\varepsilon})$,  a contraction of $\varphi$. Then it is easily seen that $\hat{f}\in {\cal{E}}( \Omega,\mathbb{R})$ thus proving that  ${\cal{F}}(\widetilde{\Omega}_c,\widetilde{\mathbb{R}})$  is nonempty.  In fact, $r= -2.1$ serves  to prove that $\hat{f}$ is $\alpha-$bounded.

Define  $ {\cal{E}}_M( \Omega,\mathbb{R})=\{\hat{f}\in {\cal{E}}( \Omega,\mathbb{R})\ :\  \partial^{\beta}\hat{f}\in {\cal{E}}( \Omega,\mathbb{R})   ,\ \forall \ \beta \in \mathbb{N}^n\}$  and  ${\cal{N}}(\Omega)=\{\hat{f}\in J\ :\  \partial^{\beta}\hat{f}\in J,\ \forall \ \beta \in \mathbb{N}^n\}$. Then the former is a subalgebra  of $ {\cal{E}}( \Omega,\mathbb{R})$     containing the latter as a  radical ideal and we  have an embedding of  the quotient algebra $\kappa : {\cal{G}}(\Omega)=\frac{{\cal{E}}_M( \Omega,\mathbb{R})}{{\cal{N}}(\Omega)}\longrightarrow {\cal{F}}(\widetilde{\Omega}_c,\widetilde{\mathbb{R}})$.  Let $\tau\in {\cal{S}}(\mathbb{R})$ be an element of the Schwartz Space which is a nonzero constant in an interval containing $0$ and  denote by  $\rho$  its inverse Fourier transform. Then $(\rho_{\varepsilon})$ defines an element   $\kappa(\rho)\in {\cal{G}}(\mathbb{R})$ which can be  identified  with $\delta$, Dirac's delta function. So here $\delta=\kappa(\rho)=\alpha_{-1}\cdot \rho\circ\alpha_{-1}$ becomes  a function!  and $\delta(0) = \kappa(\rho)(0)=[(\frac{1}{\varepsilon}\rho(0))]= \rho(0)\cdot\alpha_{-1}$, an infinity. We have that $T=x\delta=0$ in ${\cal{D}}^{\prime}(\mathbb{R})$,  but $\kappa(x)\kappa(\delta)$ is a non-zero function. In fact, $\kappa(T)(x_0\alpha)=x_0\rho(x_0 )$.   The function $\rho$ is called a {\it  mollifier}  because of the following two moments properties which follow directly from  properties of the Fourier transform: $\int\limits_{\mathbb{R}}\rho dx =1$ and $\int\limits_{\mathbb{R}}x^k\rho dx =0$, for all $k\in \mathbb{N}, k\neq 0$. These properties are crucial to embed    ${\cal{D}}^{\prime}(\Omega)$ into ${\cal{G}}(\Omega)$ and thus into ${\cal{C}}^{\infty}(\widetilde{\Omega}_c,\widetilde{\mathbb{R}})$, i.e.,   distributions become infinitely differentiable functions. It is now clear that to compose two such functions $f$ and $g$, one must have that, just  like in the classical sense, $Im(g)\subset Domain(f)$. In particular, there exists a real number  $L>0$ such that $\|g(x)\|\leq L$ in $\widetilde{\mathbb{R}}$  for all $x\in Domain(g)$, i.e., $Domain(g)\subset \widetilde{\mathbb{R}}_c^n$.  We shall now  introduce our notion of derivation, thus going back to  Newton's and Nature's most basic idea of viewing physical  reality: the notion of variation.    References on which this section is based are \cite{aragona1991intrinsic, hebe, Colom3, ku, Nedel, Ober}.

\section{Differentiability in $\widetilde{\mathbb{R}}^n$} 

In this section we shall define differentiability in the   milieus  constructed.  Milieus  which somehow are the union of each single  computable path or event we can imagine.   The challenge is how to capture variation in such  environments  whose underlying structure  was turned into a  totally disconnected topological algebra with zero divisors.      It must be done intrinsically to stand on its own. At the same time, it must extends, in a natural way,  the classical notion of variation. J. Tate must have had the same challenge when he founded non-Archimedean function theory on totally disconnected topological fields.  How to achieve  such a thing in a totally disconnected ultrametric ring where spheres are open sets and  zero divisors do exists?   The solution is to use the natural gauge, just as it was used in the previous sections to define the concepts of  moderateness, domains  and functions in these milieus.      We basically  start to  measure where classical measurement ends: at scale $\alpha$!  Without further ado, here is our definition. 

Let $\Omega\subset \mathbb{R}^n$ be open.  We say that $f\in {\cal{F}}(\widetilde{\Omega}_c,\widetilde{\mathbb{R}})$ is differentiable at a point $p\in \widetilde{\Omega}_{c}$ if there exists a vector $v\in  \widetilde{\mathbb{R}}^n$ such that 

$$\lim\limits_{h\longrightarrow 0}\frac{f(p+h)-f(p) -\langle h \mid v \rangle }{\alpha_{-\ln(\|p-h\|)}} =0    $$

\noindent Note that

$$\Big\| \frac{f(p+h)-f(p) -\langle h \mid v \rangle }{\alpha_{-\ln(\|p-h\|)}} \Big\|=  \frac{\|f(p+h)-f(p) -\langle h\mid v \rangle \|}{\|p-h\|}$$
\vspace{.1cm}

 \noindent showing that our definition is very close to the Newtonian notion of differentiability.  The last equality holds because the $\alpha_r$'s are multiplicative elements of $\widetilde{\mathbb{R}}$ in the sense of \cite{bosh}. The vector $v$, if it exists,  is  unique and  differentiability at a point implies also  continuity at that point.  The resemblance to Newton's derivation  is even closer than meets the eye.

 \begin{example} Let  $\hat{f}_{\varepsilon}=f_0\in {\cal{C}}^{\infty}(\mathbb{R}), \hat{g}_{\varepsilon}=f_0^{\prime},\ \forall \varepsilon\in I$ and set $f=\kappa(\hat{f})$ and $g=\kappa(\hat{g})$.  Then $f^{\prime} =g$, i.e.,  $(\kappa(\hat{f}))^{\prime}=\kappa(\hat{f}^{\prime})$.
\end{example}

The example not only shows that our notion of Calculus extends naturally that of  classical Calculus but also gives   the existence of  primitives. This will become more   clearer  from our Embedding Theorem.   For the Calculus developed in the Ring of Fermat reals this is not as direct and obvious as in our case (see \cite{Gio4}). Partial derivatives are defined in an obvious way as is integration on $\Omega$. Integration on other sets,  such as membranes and internal sets,  is also defined in a natural and obvious way.  The following result  shows that our notion of variation and that of Newton are a perfect mirror match. Moreover, it shows the existence of an algebra of functions in which the linear space of Schwartz's distributions can be embedded as infinitely differentiable functions defined on  a principal membrane, an example of  an internal set,   both of which are  subsets of a Cartesian product  of the generalized reals.  In particular, the product of distributions does not only makes sense,  but is well defined in a classical sense as the product of  functions.

\begin{theorem}{\bf{[Embedding Theorem]}}
There exists  an algebra embedding\\   $\kappa : {\cal{G}}(\Omega) \longrightarrow {\cal{C}}^{\infty}(\widetilde{\Omega}_c,\widetilde{\mathbb{R}})$ which commutes with derivation, i.e., $\kappa\circ\partial^{\beta}=\partial^{\beta}\circ\kappa$, for all $\beta\in \mathbb{N}^n$. When restricted to the image of $ {\cal{D}}^{\prime}(\Omega)$ in ${\cal{G}}(\Omega)$, $\kappa$   is $\mathbb{R}-$linear and,  when restricted to the image of  ${\cal{C}}^{\infty} (\Omega)$,   it  is an algebra monomorphism. Moreover $supp(\widetilde{\Omega}_c)=\overline{\Omega}$.

\end{theorem}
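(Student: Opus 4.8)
The plan is to build the map $\kappa$ from the construction already sketched at the end of Section 3: a net $\hat f=(\hat f_\varepsilon)\in\mathcal E_M(\Omega,\mathbb R)$ acts on a point $p=[(p_\varepsilon)]\in\widetilde\Omega_c$ by $\kappa(\hat f)(p)=[(\hat f_\varepsilon(p_\varepsilon))_\varepsilon]$, and this descends to $\mathcal G(\Omega)=\mathcal E_M(\Omega,\mathbb R)/\mathcal N(\Omega)$ because, by definition, elements of $\mathcal N(\Omega)$ together with all their derivatives kill every point of $\widetilde\Omega_c$. I would first check that $\kappa$ lands in $\mathcal F(\widetilde\Omega_c,\widetilde{\mathbb R})$ (done via the $\alpha$-boundedness estimate $|\hat f(\hat p)-\hat f(\hat q)|\le\|\nabla\hat f(\hat p_1)\|_2\|\hat p-\hat q\|_2$ from Section 3, restricted to a principal membrane $\widetilde\Omega_{mc}$ where $p$ lives), and that it is an algebra homomorphism, which is pointwise and immediate. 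Injectivity is the content of the displayed chain of inclusions in Section 3: $\mathcal E_M(\Omega,\mathbb R)/\mathcal N(\Omega)\hookrightarrow\mathcal F(\widetilde\Omega_c,\widetilde{\mathbb R})$; concretely, if $\kappa(\hat f)\equiv 0$ on $\widetilde\Omega_c$ then $\hat f(\hat p)\in\mathcal I$ for every $\hat p$, and testing on constant points $p_\varepsilon\equiv x$ with $x\in\Omega$ plus the moderateness of derivatives forces $\hat f\in\mathcal N(\Omega)$.

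Next I would prove $\kappa\circ\partial^\beta=\partial^\beta\circ\kappa$. By the product/chain rule for the intrinsic derivative it suffices to do this for a single partial derivative $\partial_{x_i}$, and by iteration for $\beta$ arbitrary. Fix $p\in\widetilde\Omega_{mc}$ and $h\in\widetilde{\mathbb R}^n$ with $\|h\|$ small; the candidate derivative vector is $v=[(\nabla\hat f_\varepsilon(p_\varepsilon))_\varepsilon]=\kappa(\nabla\hat f)(p)$. Writing $\hat f_\varepsilon(p_\varepsilon+h_\varepsilon)-\hat f_\varepsilon(p_\varepsilon)-\langle h_\varepsilon\mid\nabla\hat f_\varepsilon(p_\varepsilon)\rangle$ by Taylor's theorem with integral remainder, one bounds it by $C_m\|h_\varepsilon\|_2^2$ with $C_m=\sup\|\mathrm{Hess}\,\hat f_\varepsilon\|$ on $\Omega_m$, which is $\alpha$-bounded since $\hat f\in\mathcal E_M$. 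Using that $\|{\cdot}\|$ is submultiplicative and $\|rx\|=\|x\|$ for real $r\ne0$ (Theorem 2.1(1)), the quotient in the definition of differentiability has norm $\le C_m'\cdot\|h\|$, which $\to0$ as $h\to0$; uniqueness of $v$ (asserted after the definition) then gives $\kappa(\hat f)'=\kappa(\hat f')$. This reproves the Example for general $\hat f\in\mathcal E_M$ rather than just constant-in-$\varepsilon$ nets.

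The $\mathbb R$-linearity on the image of $\mathcal D'(\Omega)$ is automatic once one recalls that the embedding $\mathcal D'(\Omega)\hookrightarrow\mathcal G(\Omega)$ is the standard convolution-with-mollified-net map, which is $\mathbb R$-linear into $\mathcal E_M(\Omega,\mathbb R)$ on the nose and stays linear after passing to the quotient; it is not multiplicative precisely because $\mathcal D'(\Omega)$ is not an algebra, so nothing more is claimed. That $\kappa$ restricted to $\mathcal C^\infty(\Omega)$ is a monomorphism of algebras follows because the canonical embedding $\mathcal C^\infty(\Omega)\to\mathcal G(\Omega)$ sending $f_0$ to the class of the constant net $(f_0)_\varepsilon$ is an algebra map, and is injective by evaluating at classical constant points $x\in\Omega$. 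Finally $supp(\widetilde\Omega_c)=\overline\Omega$ was already recorded in Section 2 as the identity $supp(\widetilde\Omega_c)=\overline\Omega$ (for every $\Omega\subset\mathbb R^n$), so I would simply invoke it, or reprove it in one line: for $x\in\overline\Omega$ pick $p_\varepsilon\in\Omega$ with $p_\varepsilon\to x$, then $1\cdot p\approx 1\cdot x$ shows $x\in supp(p)\subset supp(\widetilde\Omega_c)$; conversely any $q\in supp(p)$ with $p\in\widetilde\Omega_c$ satisfies $e p\approx e q$ for some idempotent $e=\chi_S$, and choosing $\varepsilon_k\in S$, $\varepsilon_k\to0$ with $p_{\varepsilon_k}\to q$ exhibits $q$ as a limit of points of $\Omega$.

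The main obstacle I anticipate is making the derivative computation uniform over a membrane: one must be careful that the Hessian bound $C_m$ and the base-point $\hat p_1$ in the mean-value estimate stay inside a fixed compact exhaustion piece $\Omega_m$ as $h\to0$ in the sharp topology, and that the $\varepsilon$-wise Taylor remainders assemble into a genuine element of $\mathcal I$ after dividing by $\alpha_{-\ln\|p-h\|}$ — i.e. that the convergence in the limit defining differentiability is uniform in $\varepsilon$. This is exactly where $\alpha$-boundedness of all derivatives (the defining property of $\mathcal E_M(\Omega,\mathbb R)$) is used, and the bookkeeping, while routine, is the only delicate point; everything else is formal.
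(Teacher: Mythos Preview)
The paper does not prove this theorem: it is stated in Section~4 and attributed to the references \cite{aragona2005discontinuous, aragonamembranas, AJ, walter, ku} listed at the end of that section. Your proposal is therefore supplying a proof where the paper gives none, and the route you take---pointwise evaluation of moderate nets on $\widetilde\Omega_c$, a Taylor remainder estimate for $\kappa\circ\partial^\beta=\partial^\beta\circ\kappa$, and direct checks on the images of $\mathcal D'(\Omega)$ and $\mathcal C^\infty(\Omega)$---is precisely the approach of those sources and of the constructions already set up in Sections~2--3. The support identity you handle at the end is indeed already recorded in Section~2, so invoking it is legitimate.

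The one place your sketch is thin is injectivity. You say that if $\kappa(\hat f)\equiv 0$ on $\widetilde\Omega_c$ then ``testing on constant points \dots\ plus the moderateness of derivatives forces $\hat f\in\mathcal N(\Omega)$.'' What is actually needed here is the null characterization at order zero: for $\hat f\in\mathcal E_M(\Omega,\mathbb R)$, membership in $J$ (vanishing on $\widetilde\Omega_c$) automatically implies $\partial^\beta\hat f\in J$ for every $\beta$. This is a genuine lemma---proved in the Colombeau literature via a one-dimensional Taylor estimate that trades moderateness of $\partial^{\beta+e_i}\hat f$ against negligibility of $\hat f$---and not a tautology; it is exactly the content hidden behind the word ``embedding'' when Section~3 writes $\kappa:\mathcal G(\Omega)\hookrightarrow\mathcal F(\widetilde\Omega_c,\widetilde{\mathbb R})$. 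Your final paragraph correctly identifies the only other delicate point (keeping the segment $[p_\varepsilon,p_\varepsilon+h_\varepsilon]$ inside a fixed $\Omega_m$ as $h\to 0$ sharply), and that is indeed routine once one notes that $\|h\|<1$ forces $\|h_\varepsilon\|_2<\varepsilon^a$ for some $a>0$ and small $\varepsilon$.
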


As a consequence of the theorem, we have that  if $H$ is the Heaviside function, then  $\kappa(H)$ is a ${\cal{C}}^{\infty}-$function, with $H^{\prime}=\delta$ and  $\kappa(H)\notin {\cal{B}}(\widetilde{\mathbb{R}})$, i.e., $\kappa(H)^2\neq \kappa(H)$.  For $n=1$, the function $g(x)=\alpha_{\ln(\|x\|^{-2})}, g(0)=0$,   is differentiable, non-constant and $g^{\prime}(x)=0$ for all $x$. However,  if $f\in \kappa({\cal{G}}(\mathbb{R}))$ and $f^{\prime}\equiv 0$,  then $f$ is constant. Hence this fundamental classical rule is not violated if we restrict to function coming from ${\cal{G}}(\mathbb{R})$.   It is natural to call $g$  a quanta, since $F(x)=x\alpha_{\ln(\|x\|^{2})}, F(0)=\infty$ changes the sphere on which $x$ was. It acts like the inversion in the unit sphere:  $\|x\|\cdot \| F(x)\|=1$.  Other quanta  are the multiplication by a fixed $\alpha_r$.  

 If $T\in {\cal{D}}^{\prime}(\Omega)$ has compact support, then $\kappa(T)$ is constructed by convolution with the mollifier $\rho$ which immediately gives us the following remarkable result. In these milieus, the action of a distributions on ${\cal{D}}(\Omega)$ is   represented by a globally defined function! Thus revealing its true linearity.

\begin{lemma}
Let $T\in {\cal{D}}^{\prime}(\Omega)$  and $\varphi \in {\cal{D}} (\Omega)$. Then, in $\widetilde{\mathbb{R}}$,  we have that 

$$\langle T\mid \varphi\rangle =\int\limits_{\Omega}\kappa(\varphi)\kappa(T)d\Omega $$

\end{lemma}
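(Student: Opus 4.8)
The plan is to write down an explicit representative of the right‑hand side and show that it differs from the constant net $\bigl(\langle T\mid\varphi\rangle\bigr)_\varepsilon$ by a net lying in ${\cal{I}}$, so that both sides define the same class in $\widetilde{\mathbb{R}}$. The first step I would take is to reduce to the case in which $T$ has compact support, and this is essential rather than cosmetic: $\rho$ is the inverse Fourier transform of a Schwartz function, so $\rho\in{\cal{S}}(\mathbb{R}^n)$ is \emph{not} compactly supported, and a general distribution cannot be paired against the mollified test function that appears below. To reduce, choose $\chi\in{\cal{D}}(\Omega)$ with $\chi\equiv 1$ on an open $U\supset\mathrm{supp}\,\varphi$; then $\langle T\mid\varphi\rangle=\langle\chi T\mid\varphi\rangle$, and since the embedding ${\cal{D}}'(\Omega)\hookrightarrow{\cal{G}}(\Omega)$ is a sheaf morphism, $\kappa(T)$ and $\kappa(\chi T)$ have the same restriction to $\widetilde{U}_c$; because the representative net of $\kappa(\varphi)\kappa(T)$ is supported in $\widetilde{K}_c\subset\widetilde{U}_c$ with $K:=\mathrm{supp}\,\varphi$, it follows that $\int_\Omega\kappa(\varphi)\kappa(T)\,d\Omega=\int_\Omega\kappa(\varphi)\kappa(\chi T)\,d\Omega$ with $\chi T$ compactly supported. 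So assume $T$ compactly supported. As recalled just before the statement, $\kappa(T)$ then has representative $\bigl(T*\rho_\varepsilon\bigr)_\varepsilon$ with $\rho_\varepsilon(x)=\varepsilon^{-n}\rho(x/\varepsilon)$, while $\kappa(\varphi)$ — $\varphi$ being smooth — has the constant representative $(\varphi)_\varepsilon$; hence, by the definition of the integral of a generalized function, $\int_\Omega\kappa(\varphi)\kappa(T)\,d\Omega$ is the class of the net $\left(\int_\Omega\varphi(x)\,(T*\rho_\varepsilon)(x)\,dx\right)_\varepsilon$, each integrand being smooth and compactly supported in $x$.

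Next I would invoke the classical Fubini theorem for distributions: writing $(T*\rho_\varepsilon)(x)=\langle T_y\mid\rho_\varepsilon(x-y)\rangle$ and exchanging $T$ with the $x$‑integral — legitimate since $T$ is compactly supported and the kernel is smooth and compactly supported, and justifiable by Riemann‑sum approximation together with the continuity of $T$ — one gets, for each fixed $\varepsilon$, $\int_\Omega\varphi(x)(T*\rho_\varepsilon)(x)\,dx=\langle T_y\mid\psi_\varepsilon(y)\rangle$ with $\psi_\varepsilon(y):=\int\varphi(y+u)\,\rho_\varepsilon(u)\,du$. Everything then reduces to showing that $\psi_\varepsilon\to\varphi$ in ${\cal{C}}^\infty$ on a fixed compact neighbourhood $K_0$ of $\mathrm{supp}\,T$, faster than every power of $\varepsilon$.

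Then comes the moment computation. I would Taylor‑expand $\varphi(y+u)$ in $u$ about $y$ to order $N$ and integrate against $\rho_\varepsilon$: each term with $1\le|\beta|<N$ carries the factor $\int u^\beta\rho_\varepsilon(u)\,du=\varepsilon^{|\beta|}\int v^\beta\rho(v)\,dv=0$, using that all moments of $\rho$ of order $\ge 1$ vanish (which follows from $\int x^k\rho\,dx=0$, $k\neq 0$, once $\rho$ is chosen of tensor‑product form so that mixed moments factor); the order‑zero term gives $\varphi(y)\int\rho_\varepsilon=\varphi(y)$; and the remainder is $\le C_N\,\varepsilon^N\sup_{|\beta|=N}\|\partial^\beta\varphi\|_\infty\int|v|^N|\rho(v)|\,dv=O(\varepsilon^N)$, the integral being finite since $\rho\in{\cal{S}}(\mathbb{R}^n)$. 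Differentiating under the integral sign and repeating with $\partial^\gamma\varphi$ in place of $\varphi$ gives $\|\partial^\gamma(\psi_\varepsilon-\varphi)\|_{L^\infty(K_0)}=O(\varepsilon^N)$ for every multi‑index $\gamma$ and every $N$. Since a compactly supported $T$ is continuous for the ${\cal{C}}^\infty$‑topology, $|\langle T\mid\psi_\varepsilon-\varphi\rangle|\le C\sup_{|\gamma|\le m}\|\partial^\gamma(\psi_\varepsilon-\varphi)\|_{L^\infty(K_0)}=O(\varepsilon^N)$ for all $N$; that is, $\bigl(\langle T\mid\psi_\varepsilon\rangle-\langle T\mid\varphi\rangle\bigr)_\varepsilon\in{\cal{I}}$. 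Hence the classes of $\int_\Omega\kappa(\varphi)\kappa(T)\,d\Omega$ and of $\langle T\mid\varphi\rangle$ coincide in $\widetilde{\mathbb{R}}$, which is the assertion.

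I expect the genuine obstacle to be not any single estimate — the Taylor‑with‑vanishing‑moments argument is standard — but the reduction to compactly supported $T$: it requires that the particular embedding ${\cal{D}}'(\Omega)\to{\cal{G}}(\Omega)$ used here be a morphism of sheaves on $\widetilde\Omega_c$, so that $\kappa(T)$ agrees with $\kappa(\chi T)$ over the membrane carrying $\mathrm{supp}\,\varphi$; without it the general (non‑tempered) case cannot be made sense of at all. A secondary point is the $n$‑dimensional bookkeeping of the moment conditions, which is precisely why $\rho$ must be taken of product type. Granting these, the lemma is the classical mollifier identity $\langle T,\varphi\rangle=\lim_{\varepsilon\to 0}\int\varphi\,(T*\rho_\varepsilon)$, read inside $\widetilde{\mathbb{R}}$ with the limit upgraded to equality modulo ${\cal{I}}$.
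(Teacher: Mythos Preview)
The paper does not supply its own proof of this lemma: immediately after the statement it writes ``This lemma was proved for an open subset $\Omega\subset\mathbb{R}^n$ in \cite{ku}'' and moves on. So there is no in-paper argument to compare against; one can only check your proposal on its own merits and against the standard literature proof.

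Your argument is correct and is precisely the classical one. Reducing to compactly supported $T$ by a cutoff $\chi\equiv 1$ near $\mathrm{supp}\,\varphi$ and invoking the sheaf property of the embedding $\iota:{\cal{D}}'(\Omega)\to{\cal{G}}(\Omega)$; taking $(T*\rho_\varepsilon)_\varepsilon$ as representative of $\kappa(T)$ and the constant net $(\varphi)_\varepsilon$ for $\kappa(\varphi)$; exchanging the $x$-integral with $T$ by the distributional Fubini theorem to obtain $\langle T,\psi_\varepsilon\rangle$ with $\psi_\varepsilon=\varphi*\check{\rho}_\varepsilon$; and finally using Taylor expansion together with the vanishing moments $\int x^\beta\rho\,dx=0$ ($|\beta|\ge 1$) to get $\partial^\gamma(\psi_\varepsilon-\varphi)=O(\varepsilon^N)$ for all $N$, hence $\langle T,\psi_\varepsilon-\varphi\rangle\in{\cal{I}}$ --- this is exactly how the identity is established in Kunzinger's thesis and in the standard references on Colombeau algebras. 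Your caveat about taking $\rho$ of product type in dimension $n>1$ so that the mixed moments factor is also the usual device, consistent with the paper's one-dimensional construction of $\rho$. The point you single out as the real obstacle --- that the localisation step needs the embedding to be a sheaf morphism, which in turn is tied to how the embedding of a non-compactly-supported $T$ is \emph{defined} (via a partition of unity glued from local convolutions) --- is well taken and is indeed where the substance lies; the paper is silent on this, having only described $\kappa(T)$ for $T$ of compact support.
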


 This lemma was proved for an open subset $\Omega\subset \mathbb{R}^n$ in   \cite{ku} and used in \cite{walter} to embed ${\cal{D}}^{\prime}(\Omega)$ into an {\it Aragona algebra}, a sub-algebra of  ${\cal{C}}^{\infty}(\widehat{\Omega},\mathbb{L})$, $\widehat{\Omega}\subset \mathbb{L}^n$, where $\mathbb{L}$ is a totally ordered  non-Archimedean ultra-metric  field of characteristic zero.    Differentiability readily extends to vector valued functions and thus we have this new Calculus available with  the same classical features and which, although $\mathbb{R}^n$ is discretely embedded into $\widetilde{\mathbb{R}}^n$,  extends the latter in a very natural way.    References on which this section is based are \cite{aragona2005discontinuous, aragonamembranas, AJ, walter,  ku}.

\section{ A Fixed Point Theorem} 

	 	
	In this section we shall be working in what are known as the full environments  which  permit   ${\cal{D}}^{\prime}(\Omega)$ to be embedded canonically into ${\cal{G}}(\Omega)$. Those   of the previous sections are  known as the simple milieus  and the embedding of  ${\cal{D}}^{\prime}(\Omega)$ into ${\cal{G}}(\Omega)$ depends on the mollifier $\rho$. This can be a feature since one may adapt $\rho$ to the specific problem one is interested in.  Although the construction in the full case  is more elaborate, everything is very similar to the simplified construction and the results on the basic underlying, in this case denoted by  $\widetilde{\mathbb{K}}_f$,  are identical to what we already saw. We also do have an embedding theorem and the lemma of the previous section also holds. The reader can find more in   \cite{ juriaans2022fixed, inter}  where results in this and the next   sections are proved in   $\widetilde{\mathbb{R}}$. The best way to master  the math in these milieus is to master their algebraic theory and understand well how the topology interlinks with the former.  Nearly everything else works  as  in the classical theory.   That is why  we start by reviewing the {\it sharp topology}    in these full milieus. 
	
	The sequence   $({\cal{A}}_q)_{q\in \mathbb{N}}$ stands for a decreasing chain of sets, with empty intersection,   consisting of functions  $\varphi$ (recall the definition  of its contraction $\varphi_{\varepsilon}$), of compact support,  with the same property at the origin as the function that defined the mollifier  $\rho$. The natural gauge in $\widetilde{\mathbb{K}}_f$ is   denoted by $\alpha^\bullet$. One of its representatives is   $\hat{\alpha}^\bullet\ : \ {\cal{A}}_0\longrightarrow \mathbb{K}$ is given by   $\hat{\alpha}^\bullet(\varphi)=i(\varphi)$, the diameter of the support of $\varphi$. We thus have that  $\hat{\alpha}^\bullet(\varphi_{\varepsilon})=  i(\varphi)\varepsilon $. A part from these technicalities, definitions and results mimic   those  from the previous sections.  Details can be found in the references mentioned at the end of this section.   Here  $\mathbb{K}\in \{\mathbb{R},\mathbb{C}\}$.

   Given $x\in\widetilde{\mathbb K}_f$   set $A(x):=\{r\in\mathbb R:\alpha^\bullet_r x\approx0\}$ and define the valuation of $x$ as $V(x)=sup(A(x))$. We have  that $D:\widetilde{\mathbb K}_f\times \widetilde{\mathbb K}_f\rightarrow \mathbb R_+$ defined by $D(x,y)=e^{-V(x-y)}$ is a translation invariant  ultra-metric on $\widetilde{\mathbb K}_f$.   It determines a uniform structure on  $\widetilde{\mathbb K}_f$,  the sharp uniform structure,   and the topology resulting from $D$ is   the sharp topology. Setting  $\|x\|=D(x,0) \  x\in \widetilde{\mathbb K}_f$,   the distance between   $x, y \in \widetilde{\mathbb K}_f $ is given by $D(x,y)=\|x-y\|$. This distance extends  to $\widetilde{\mathbb K}_f^n $ in the obvious way.

Let $\Omega \subset \mathbb{R}^n$ and let $\ (\Omega_l)_l$ be an exhaustion of $\Omega$. For $f\in \mathcal G_f(\Omega)$, define the family of sets $A_{lp}(f):=\{r\in\mathbb R:\alpha^\bullet_r\|f\|_{lp}\approx 0\}$ and define the family of pseudo-valuations $V_{lp}(f):=sup (A_{lp}(f))$. It follows that $D_{lp}: \mathcal G_f(\Omega)\times \mathcal G_f(\Omega)\rightarrow \mathbb R_+$ defined by $D_{lp}(x,y)=e^{-V_{lp}(x-y)}$ is an family of pseudo-ultrametrics on $\mathcal G_f(\Omega)$, where $l,p\in\mathbb N$ denote the index of the exhaustion $(\Omega_l)$ and the order of the derivative of $f$, respectively.  The  uniform structure it determines on  $\mathcal G_f(\Omega)$ is called the sharp uniform structure  and the resulting   topology is called  the sharp topology.    If $ x=[(x_\varphi)_\varphi]\in\widetilde{\mathbb K}_f$ we set  $  |x|=[(|x_\varphi|)_\varphi]$. If $x_0\in\widetilde{\mathbb K}_f$ and $r\in\mathbb R$, let  $V_r[x_0]:=\{x\in\widetilde{\mathbb K}_f:|x-x_0|\leq\alpha_r^\bullet\} \ \ \mbox{and} \ \ \mathcal B_f:=\{V_r[0]:r\in\mathbb R\}$. Similarly, for every $f\in \mathcal G_f(\Omega)$, and fixed $\beta\in \mathbb N^n_0$ and $l\in\mathbb N$,  let $\|f\|_{\beta,l}:=[(\|\partial^\beta f_\varphi(\cdot)\|_l)_{\varphi\in {\cal{A}}_0}]$, where $\|\cdot\|_l$ denotes the supremum norm over $\Omega_l$. Finally, for  $f_0\in \mathcal G_f(\Omega)$ and $r\in\mathbb R$, define  $W_{l,r}^\beta[f_0]:=\{f\in\mathcal G_f(\Omega):\|f-f_0\|_{\sigma,l}\leq\alpha_r^\bullet \ \ \forall \ \sigma\leq\beta\}$ and 
  $\mathcal B_{f\Omega}:=\{W_{l,r}^\beta[0]:\beta\in\mathbb N^n_0, \ l\in\mathbb N \ e \ r\in\mathbb R\}$.

\begin{theorem} $\ $

\begin{enumerate}
\item $\mathcal B_f$ is a system of 0-neighborhoods of   $\ \widetilde{\mathbb K}_f$  that induces a topology compatible with the ring structure of $\ \widetilde{\mathbb K}_f$ and coincides with the sharp topology.

\item$\mathcal B_{f\Omega}$ is a system of 0-neighborhoods of $\mathcal G_f(\Omega)$ that induces a topology compatible with the ring structure of $\mathcal G_f(\Omega)$ and coincides with the sharp topology. 
\end{enumerate}
Moreover $\ \widetilde{\mathbb K}_f$ is precisely the set of constant generalized functions on $\mathcal{ G}_f(\Omega)$ and the topology of the former is induced by that of the latter.

\end{theorem}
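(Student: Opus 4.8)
The plan is to prove the two statements in parallel — for each we must check (i) that the family is a base of $0$-neighborhoods, (ii) that the induced topology is a ring topology, (iii) that it is the sharp topology — and then to deduce the ``moreover'' clause. For (i) the family is in both cases a filter base of subsets containing $0$ simply because its members are nested: $V_s[0]\subseteq V_r[0]$ whenever $r\le s$ (since $\alpha^\bullet_{s-r}$ is an infinitesimal $<1$, so $\alpha^\bullet_s<\alpha^\bullet_r$), and likewise $W^{\beta'}_{l',r'}[0]\subseteq W^{\beta}_{l,r}[0]$ whenever $\beta'\ge\beta$ componentwise, $l'\ge l$ and $r'\ge r$; hence any finite intersection again contains a member of the family.

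For $\widetilde{\mathbb K}_f$ I would obtain (ii) from the standard criterion for a ring topology, exhibiting for each $V_r[0]$ some $V_s[0]$ with $V_s[0]+V_s[0]\subseteq V_r[0]$, $-V_s[0]\subseteq V_r[0]$, $V_s[0]\cdot V_s[0]\subseteq V_r[0]$, and, for each fixed $a\in\widetilde{\mathbb K}_f$, $a\cdot V_s[0]\subseteq V_r[0]$. All four reduce to the valuation inequalities $V(x+y)\ge\min\{V(x),V(y)\}$ and $V(xy)\ge V(x)+V(y)$ (the latter being the submultiplicativity $\|xy\|\le\|x\|\,\|y\|$ of the Extended Fundamental Theorem) together with the fact that, since $V_r[0]$ sits between $\{x:V(x)>r\}$ and $\{x:V(x)\ge r\}$, it is enough to land in $\{V\ge s\}$ for any $s>r$: so $s>r$ serves for addition and negation, $s$ with $2s>r$ for products, and $s>r-V(a)$ (recall $V(a)\in\mathbb R$) for the last. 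More economically, since the Extended Fundamental Theorem already asserts that the sharp topology makes $\widetilde{\mathbb K}_f$ a topological ring, (ii) is inherited once (iii) is proved. The heart of (iii) is mutual refinement: the sharp topology is by definition the metric topology of $D(x,y)=e^{-V(x-y)}$, with base $\{\{x:\|x\|<\varepsilon\}\}_{\varepsilon>0}=\{\{x:V(x)>r\}\}_{r\in\mathbb R}$ of $0$-neighborhoods, and the valuation properties recorded just before the theorem give $\{x:V(x)>r\}\subseteq V_r[0]\subseteq\{x:V(x)\ge r\}=\{x:\|x\|\le e^{-r}\}$; the one delicate point is that $V$ need not attain its supremum, so these inclusions must be handled with an arbitrarily small shift in $r$, which is routine. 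The two families therefore generate the same topology.

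For $\mathcal G_f(\Omega)$ the outline is identical but the multiplicative estimates use the Leibniz rule. For (ii): $W^\beta_{l,s}[0]+W^\beta_{l,s}[0]\subseteq W^\beta_{l,r}[0]$ for $s>r$ because $W^\beta_{l,s}[0]\subseteq\{f:V_{\sigma,l}(f)\ge s\ \forall\sigma\le\beta\}$, the latter is additively closed by the ultrametric inequality for each pseudo-valuation $V_{\sigma,l}$, and $\{V_{\sigma,l}\ge s\}\subseteq\{V_{\sigma,l}>r\}\subseteq W^\beta_{l,r}[0]$; for products one uses $\partial^\sigma(fg)=\sum_{\tau\le\sigma}\binom{\sigma}{\tau}\partial^\tau f\,\partial^{\sigma-\tau}g$, so that \emph{all} derivatives of $fg$ up to $\beta$ are controlled by those of $f$ and $g$ up to $\beta$ — this is exactly why $W^\beta_{l,r}[0]$ is defined using the whole set $\{\sigma\le\beta\}$ — whence $V_{\sigma,l}(fg)\ge 2s$ and $W^\beta_{l,s}[0]\cdot W^\beta_{l,s}[0]\subseteq W^\beta_{l,r}[0]$ once $2s>r$; and $g_0\cdot W^\beta_{l,s}[0]\subseteq W^\beta_{l,r}[0]$ follows from the same expansion together with the moderateness of $g_0\in\mathcal G_f(\Omega)$, whose finitely many pseudo-valuations $V_{\sigma,l}(g_0)$, $\sigma\le\beta$, are bounded below by some $-m$, so $s>r+m$ suffices. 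Condition (iii) is again mutual refinement, pseudo-valuation by pseudo-valuation: using that the family $(D_{l,p})$ is directed, a basic sharp $0$-neighborhood may be taken as $\{f:D_{L,P}(f,0)<\varepsilon\}=\{f:V_{L,P}(f)>r\}$, and, with $\gamma=(P,0,\dots,0)$, the bookkeeping of the previous paragraph gives $\{f:V_{L,P}(f)>r\}\subseteq W^{\gamma}_{L,r}[0]$; conversely, given $W^\beta_{l,r}[0]$, the choice $P=|\beta|$, $L=l$ and $\varepsilon$ small yields $\{f:D_{L,P}(f,0)<\varepsilon\}\subseteq W^\beta_{l,r}[0]$. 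Hence $\mathcal B_{f\Omega}$ is a base for the sharp topology, and compatibility with the ring structure follows from the refinements just produced.

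Finally, for the ``moreover'' clause I would use the canonical map $\iota:\widetilde{\mathbb K}_f\to\mathcal G_f(\Omega)$ sending $c=[(c_\varphi)_\varphi]$ to the class of the constant net $(x\mapsto c_\varphi)_\varphi$. Its image is, by definition, the set of constant generalized functions, and $\iota$ is a ring monomorphism because a constant net is negligible in $\mathcal G_f(\Omega)$ exactly when $(c_\varphi)_\varphi$ is negligible in $\widetilde{\mathbb K}_f$ (all higher derivatives of a constant vanish identically, and $\|\iota(c)\|_{0,l}=|c|$ for every $l$). That $\iota$ is a homeomorphism onto its image — i.e.\ that the sharp topology of $\widetilde{\mathbb K}_f$ is induced from that of $\mathcal G_f(\Omega)$ — is then immediate from the identity $\iota^{-1}\bigl(W^\beta_{l,r}[0]\bigr)=V_r[0]$, valid for every $\beta$ and $l$: the condition $\|\iota(c)\|_{\sigma,l}\le\alpha^\bullet_r$ reads $|c|\le\alpha^\bullet_r$ for $\sigma=0$ and is vacuous for $\sigma\ne0$. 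So the trace of $\mathcal B_{f\Omega}$ on $\iota(\widetilde{\mathbb K}_f)$ is exactly $\iota(\mathcal B_f)$. The main obstacle in the whole argument is nothing conceptual: it is the careful matching of the ``algebraic'' balls $V_r[0]$, $W^\beta_{l,r}[0]$ (defined by $\le$ in the partial order and the exact exponent $r$) with the metric/uniform balls built from the possibly-unattained suprema $V$, $V_{l,p}$ — the strict/non-strict and arbitrarily-small-shift bookkeeping — together with keeping straight, in the $\mathcal G_f(\Omega)$ case, the interplay of the exhaustion index, the multi-index, and the radius in the Leibniz estimates.
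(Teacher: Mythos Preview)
The paper does not supply a proof of this theorem: it is stated without argument, with the surrounding text referring the reader to the cited literature (in particular \cite{aragona2009natural, juriaans2022fixed, inter}) for the details. Your proposal therefore cannot be compared line-by-line against a proof in the paper; what you have written is a self-contained argument filling in what the authors leave to references.

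On its own merits your sketch is sound and follows the standard route for results of this type: verify the filter-base property via nestedness, check the ring-topology axioms through the (pseudo-)valuation inequalities, and establish coincidence with the sharp topology by mutual refinement, handling the strict/non-strict discrepancy between the algebraic balls $V_r[0]$, $W^\beta_{l,r}[0]$ and the metric balls by a small shift in $r$. The use of the Leibniz rule to control $\partial^\sigma(fg)$ in terms of $\partial^\tau f$, $\partial^{\sigma-\tau}g$ with $\tau,\sigma-\tau\le\beta$ is exactly why the $W^\beta_{l,r}[0]$ are defined with the full range $\sigma\le\beta$, and your remark to that effect is on point. One small cosmetic issue: in part (iii) for $\mathcal G_f(\Omega)$ the choice $\gamma=(P,0,\dots,0)$ works for the inclusion you state, but obscures the symmetry; taking $\gamma=(P,\dots,P)$ would make both refinements transparent, since $\{\sigma:\sigma\le\beta\}\subseteq\{\sigma:|\sigma|\le|\beta|\}\subseteq\{\sigma:\sigma\le(|\beta|,\dots,|\beta|)\}$. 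The ``moreover'' clause via $\iota^{-1}(W^\beta_{l,r}[0])=V_r[0]$ is clean and correct.
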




 
\begin{definition} {\bf[Hypernatural Numbers and Hypersequences]} 

\begin{enumerate}
\item   The subset $\widetilde{\mathbb N}:=\{n=[(n_\varphi)_{\varphi\in A_0}]\in \widetilde{\mathbb R}: \ n_\varphi\in \mathbb N \ \forall \ \varphi\in A_0\}$ is called the set of hypernatural numbers. In short,   hypernaturals are generalized numbers with representatives in ${\cal{E}}_{M_f}(\mathbb N)$.

\item   A hypersequence is a function $f:\widetilde{\mathbb N}\rightarrow \mathcal G_f$ and will be denoted by $(f_n)_{n\in\widetilde{\mathbb N}}$.
 
\item A hypersequence $(f_n)_{n\in\widetilde{\mathbb N}}$ converges to $f\in\mathcal G_f$ if for any neighborhood of zero $W_{l,r}^\beta[0]$ there exists $n_0\in\widetilde{\mathbb N}$ such that if $n\geq n_0$ then $(f_n-f)\in W_{l,r}^\beta[0]$.
\end{enumerate}

\end{definition}

If $f(\widetilde{\mathbb N})\subset \widetilde{\mathbb K}_f$ then the equality $\widetilde{\mathbb K}_f\cap W_{l,r}^\beta[0]=V_r[0]$ implies that $f_n\rightarrow f$ if under the same conditions of the above definition we have $(f_n-f)\in V_r[0]$.   A hypersequence $(f_n)_{n\in\widetilde{\mathbb N}}$ is \textit{Cauchy} if for any neighborhood of zero $W_{l,r}^\beta[0]$ there exists $n_0\in\widetilde{\mathbb N}$ such that if $n,m\geq n_0$ then $(f_n-f_m)\in W_{l,r}^\beta[0]$. Similarly, we can replace the condition $(f_n-f_m)\in W_{l,r}^\beta[0]$ by $(f_n-f_m)\in V_r[0]$ if $f(\widetilde{\mathbb N})\subset \widetilde{\mathbb K}_f$. Since $\mathcal G_f$ is complete, we have that $(f_n)_{n\in\widetilde{\mathbb N}}$ is convergent if, and only if, it is Cauchy. Note that $\widetilde{\mathbb{N}}$ is uncountable and not totally ordered. Hence  converging via  a hypersequence is  a matter of choices but all leading to the same point. Recall that in our model $\mathbb{K}^n$ is embedded into $\widetilde{\mathbb{K}}_f^n$ as a grid of equidistant  points, this common distance being equal to $1$. Hence the concept of hypersequences  helps to understand how  physical reality behaves when distances are smaller than  a certain constant. Here this distance is $1$, which is just a  symbol and  thus  replaceable   by any other constant,  multiplying the ultrametric by this constant.


    \begin{example}
The  sequence $(\frac{1}{n})_{n\in \mathbb{N}}$ does not converge  in $\widetilde{\mathbb{K}}_f$ but the hypersequence $(\frac{1}{n})_{n\in \widetilde{\mathbb N}}$ does converges to $0$ in $\widetilde{\mathbb{K}}_f$. In fact, for an arbitrarily chosen $r>0$, it suffices to take $n_0=[(\lfloor i(\varphi)^{-r}+1\rfloor)_\varphi]$. Then, if $n=[(n_\varphi)_\varphi]>n_0$, we have $\frac{1}{n_\varphi}<\frac{1}{\lfloor i(\varphi)^{-r}+1\rfloor}\leq\frac{1}{i(\varphi)^{-r}}$ for all $\varphi\in {\cal{A}}_q(1)$ with $q>p$ for some $p\in\mathbb N$. Therefore, $|\frac{1}{n}-0|<\alpha^\bullet_r$ and thus $\frac{1}{n}\in V_r[0].$
\end{example}

Let $(B_\varphi)_{\varphi\in {\cal{A}}_0}$ be a net of subsets of $\mathbb{K}^n$. The internal set  $B=[(B_\varphi)_\varphi]$ is the subset $B\subset \widetilde{\mathbb{K}}_f ^n$ whose points $p\in B$ have some representative $(\hat{p}_{\varphi})$ with $\hat{p}_{\varphi}\in B_{\varphi}$, $\forall \varphi\in {\cal{A}}_0$. Since points of $\widetilde{\mathbb{K}}_f ^n$ must be thought of as germs, in the full environments, this  translate into the following: there exist $q\in \mathbb{N}$ such that $\forall \varphi\in {\cal{A}}_q$, there exists $\eta_\varphi\in I$ such that $\hat{p}_{\varphi_{\varepsilon}}\in B_{\varphi_{\varepsilon}}$, $\forall \varepsilon <\eta_\varphi$.

\begin{definition}
Let $B=[(B_\varphi)_\varphi]$, $C=[(C_\varphi)_\varphi]$ be internal sets in $\mathcal G_f$. An application  $T:B\rightarrow C$ is a map represented by a moderate  net  $(T_\varphi:B_\varphi\rightarrow C_\varphi)_{\varphi\in {\cal{A}}_0}$ such that $T(f)=[(T_\varphi(f_\varphi))_\varphi]$ for all $f=[(f_\varphi)_\varphi]\in B$.
\end{definition}

\begin{definition}{\bf{[Contraction]}}
Let $B=[(B_\varphi)_\varphi]\subset \mathcal G_f$ be an internal set, and let $T:B\rightarrow B$ be an application with representative $(T_\varphi:B_\varphi\rightarrow B_\varphi)_{\varphi\in {\cal{A}}_0}$. We say that $T$ is a contraction if there exist $L\in \widetilde{\mathbb R}^*_{+f}$ and $\lambda\in (0,1)$ such that $L<\lambda$ and $\|T(f)-T(g)\|_l\leq L\|f-g\|_l$ for all $l\in\mathbb N$.
\end{definition}
    

Given  $n=[(n_\varphi)_\varphi]\in\widetilde{\mathbb N}$,  the map $T^n:B\rightarrow B$ with representative $(T^{n_\varphi}_\varphi : B_\varphi\rightarrow B_\varphi)_{\varphi\in {\cal{A}}_0}$, where $n_\varphi$ compositions of $T_\varphi$ are understood, if well defined  is a contraction since  $\|T^n(f)-T^n(g)\|_l\leq L^n\| f-g\|_l\leq\lambda^n\| f-g\|_l$ for all $l\in\mathbb N$. If it is necessary to specify $\lambda$, we shall say that $T$ is a $\lambda$-contraction. Let   $\Omega\subset\mathbb R^n$, $l,p\in\mathbb N$ and  consider the ultrametric in $\mathcal G_f(\Omega)$  $D: \mathcal G_f(\Omega)\times \mathcal G_f(\Omega)\rightarrow \mathbb R_+$ defined by  $$D(f,g):=sup\left\{\frac{2\cdot D_{ll}(f,g)}{1+D_{ll}(f,g)}: \ l\in\mathbb N \right\}$$


Let $r_0<2$ and $f\in B_{r_0}(0)$. Then it follows that $\frac{2\cdot D_{ll}(f,g)}{1+D_{ll}(f,g)}\leq r_0, \ \forall \ l\in \mathbb{N}$ and hence $V_{ll} (f,g)\geq ln(\frac{2-r_0}{r_0}),\ \forall\  l\in \mathbb{N}$. Take $d_0$ such that $\ln (d_0)\leq ln(\frac{2-r_0}{r_0})$, then $\| f\|_{l} \leq \alpha^{\bullet}_{d_0}$, $\forall\ l\in \mathbb{N}$.    Now let  $\lambda\in\  ]0,1[$ be fixed and suppose that we want $\lambda^{n_0}\in V_t(0)$, where $n_0=[(n_{0\varphi})]$ and $t>0$.  For this to occur one must have $\lambda^{n_{0\varphi} }<i(\varphi)^t$. From this it follows that $n_{0\varphi}>(\frac{-t}{|\ln(\lambda)|})\cdot \ln(i(\varphi))$. Hence we may take $n_{0\varphi}=2\cdot \biggl\lfloor\frac{-t}{|\ln(\lambda)|} \cdot \ln(i(\varphi)))\biggr\rfloor$. It is clear that $n_0<\alpha^{\bullet}_{-1.1}$ and therefore  is moderate. For any $m>n_0$ we have $\lambda^m<\lambda^{n_0}\in V_t(0)$ and thus  $\lambda^m \in V_t(0)$ . This proves that the hypersequence $(\lambda^n)$ converges to $0$.  We shall use these two facts  in the proof of our next theorem.

 \begin{theorem}{\bf{[Fixed Point Theorem]}}

Let $\Omega\subset\mathbb R^N$, $A=[(A_\varphi)_\varphi]\subset B_{r_0}(0)\cap\mathcal G_f(\Omega)$, $r_0<2$, be an internal set, and $T:A\rightarrow A$ be given with representative $(T_\varphi:A_\varphi\rightarrow A_\varphi)_{\varphi\in {\cal{A}}_0}$. If there exists $k=[(k_\varphi)_\varphi]\in\widetilde{\mathbb N}$ such that   $T^k=(T_\varphi^{k_\varphi})$ is a $\lambda$-contraction, then $T$ is well defined,     continuous and has a unique fixed point in $  A$.

\end{theorem}

\begin{proof} We start by noting that $B$ is a complete topological space. This follows because internal sets are closed and ${\cal{G}}_f(\Omega)$ is a complete ultrametric space.   The  Lipschitz condition implies that $T$   is  well-defined and continuous.  For   $g_0\in A$,  we shall prove that  the hypersequence $(T^n(g_0))_{n\in\widetilde{\mathbb N}}$ converges to a point $f_0\in A$. To achieve this it is enough  to prove that $(T^n(g_0))_{n\in\widetilde{\mathbb N}}$ is a Cauchy hypersequence. Consider the basic neighborhood  $W_{l,r}^l[0]\in {\cal{B}}_f$.   Since $0<\lambda < 1$ and $A\subset B_{r_0}(0)$,  and using what we mentioned in the paragraph before the theorem,    we may  choose  $n_0\in \widetilde{\mathbb N}$  and $d_0$ such that $\lambda^{n_0}\alpha^\bullet_d\in V_{4^lr}[0]$.  Set  $r_1=4^{l-1}r$ and let  take $n,s>n_0$. Writing $n=n_0+p$ and $s=n_0+q$ we have that $\|T^p(g_0)-T^q(g_0)\|_l\leq\alpha^\bullet_{d_0}$ and thus  $\|T^n(g_0)-T^s(g_0)\|_l = \|T^{n_0}(T^p(g_0))-T^{n_0}(T^q(g_0))\|_l 
\leq    \lambda^{n_0} \|T^p(g_0)-T^q(g_0)\|_l 
\leq  \lambda^{n_0}\alpha^\bullet_{d_0}\in V_{4r_1}[0] $.  This proves that $F:= T^n(g_0)-T^s(g_0)\in W^0_{l,4r_1}[0]$.  Consider the embedding   $\kappa : \mathcal G_f(\Omega)\longrightarrow\mathcal {\cal{C}}^{\infty}(\widetilde{\Omega}_{cf},\widetilde{\mathbb K}_f)$ and identify  $F$   with $\kappa(F)$. Without loss of generality, we may suppose that $N=1$. Since $F$ is differentiable in $\widetilde{\Omega}_{cf}$, it follows that $$\lim\limits_{r_1\rightarrow +\infty}\frac{F(x+\alpha^\bullet_{2r_1})-F(x)}{\alpha^\bullet_{2r_1}}=F'(x),$$  for all $x\in\widetilde{\Omega}_{cf}$. Therefore,

\begin{equation}
\lim\limits_{r_1\rightarrow +\infty}\frac{|F(x+\alpha^\bullet_{2r_1})-F(x)|}{\alpha^\bullet_{2r_1}}=|F'(x)|. \label{ptofixo1}
\end{equation}

\noindent Since $F\in W^0_{l,4r_1}[0]$, we have that  $$|F(x+\alpha^\bullet_{2r_1})-F(x)|\leq |F(x+\alpha^\bullet_{2r_1})|+|F(x)|\leq 2\alpha^\bullet_{4r_1}$$ 

\noindent and therefore

\begin{equation}
\frac{|F(x+\alpha^\bullet_{2r_1})-F(x)|}{\alpha^\bullet_{2r_1}}\leq 2\alpha^\bullet_{2r_1}
\label{ptofixo2}
\end{equation}

\noindent  for all $x\in\widetilde{\Omega}_{cf}$. From \ref{ptofixo1} and \ref{ptofixo2}, it follows that $|F'(x)|\leq 2\alpha^\bullet_{2r_1}<\alpha^\bullet_{r_1}$ for all $x\in\widetilde{\Omega}_{cf}$, which gives us that $F\in W^1_{l,r_1}[0]$. Since $F$ is a ${\cal{C}}^{\infty}-$function, we may repeat the same process for its derivates up to order $l$  concluding  that $F\in W^{l}_{l,r}[0]$ and thus,   $(T^n(g_0))_{n\in\widetilde{\mathbb N}}$ is a Cauchy hypersequence.  Uniqueness of the fixed point being  obvious,  completes the proof.
 \end{proof}
 
 
From the   induction step given in the proof of the theorem we deduce  the  topological tool which we coin   the {\it  Down Sequencing Argument} and  shorten it  to   DSA.  The DSA  ensures that moderation and nullity at level 0 imply nullity.

\begin{theorem}{\bf{[Down Sequencing Argument]}}
Let $f\in\mathcal G_f(\Omega)$,    $f\in W^0_{l,4^kr}[0]$ with $r>0$.  Then $f\in W^{k}_{l,r}[0]$, i.e.,  $W^0_{l,4^kr}[0]\subset W^{k}_{l,r}[0]$.
\end{theorem}

The notion of association has long been seen and used as an algebraic notion. But here we observe  that  it is in fact  a topological notion which implies the following.

\begin{theorem}
Let $\Omega$ be an open subset of $\mathbb{R}^n$. Then ${\cal{D}}^{\prime}(\Omega)$ embeds as a  discrete grid in ${\cal{G}}(\Omega)$. Moreover, ${\cal{C}}^{\infty}(\Omega)$   embeds as a grid of equidistant  points into ${\cal{G}}_f(\Omega)$.
\end{theorem}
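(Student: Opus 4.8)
The plan is to prove the two embedding assertions separately, in each case exhibiting the claimed metric structure on the image inside $\mathcal{G}(\Omega)$ (respectively $\mathcal{G}_f(\Omega)$) and then invoking the earlier characterizations of the sharp topology to upgrade ``discrete'' to ``discrete grid'' and ``equidistant grid''. First I would recall that a classical function $f_0\in\mathcal C^\infty(\Omega)$ embeds via the constant net $\hat f_\varphi=f_0$ (for all $\varphi$), so that $\kappa(f_0)\in\mathcal C^\infty(\widetilde\Omega_c,\widetilde{\mathbb R})$ by the Embedding Theorem of Section~4, and this assignment is an algebra monomorphism. The key computation is then: for two distinct $f_0,g_0\in\mathcal C^\infty(\Omega)$, the difference net $h_\varphi=f_0-g_0$ is a nonzero classical function, hence $\|h\|_{\beta,l}=[(\|\partial^\beta(f_0-g_0)\|_l)_\varphi]$ is, for suitable $\beta,l$, a nonzero real constant times $\mathbf 1$ in $\widetilde{\mathbb R}_f$, i.e.\ a non-infinitesimal, non-infinity invertible element whose valuation is exactly $0$. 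Consequently $V_{\beta,l}(\kappa(f_0)-\kappa(g_0))=0$ for some $(\beta,l)$ and $\geq 0$ for all, so the pseudo-ultrametrics $D_{lp}$ give $D_{lp}(\kappa(f_0),\kappa(g_0))=e^{0}=1$ at the witnessing index and $\leq 1$ elsewhere; combining over indices (as in the $\sup$-type metric $D$ used before the Fixed Point Theorem) yields distance exactly $1$ between any two distinct points of $\kappa(\mathcal C^\infty(\Omega))$. That is precisely the ``grid of equidistant points with common distance $1$'' statement, and it mirrors the already-asserted fact that $\mathbb R$ (and $\mathbb K^n$) sit inside $\widetilde{\mathbb R}$ (resp.\ $\widetilde{\mathbb K}_f^n$) as such a grid.

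For the $\mathcal D'(\Omega)$ assertion I would proceed similarly but more carefully, since distributions are embedded by convolution with the mollifier $\rho$ (Section~3) rather than by constant nets, so the difference of the embeddings of two distinct distributions $S\neq T$ is $\kappa(S-T)$ with $S-T\in\mathcal D'(\Omega)\setminus\{0\}$. The plan is to show $\kappa(S-T)$ is not an infinitesimal in the sharp topology, i.e.\ that some pseudo-valuation $V_{lp}$ of it is bounded (indeed I expect $\leq 0$, possibly negative because $\kappa$ of a nonzero distribution can involve the mollifier scaling $\alpha_{-1}$). Concretely, testing against a $\varphi\in\mathcal D(\Omega)$ with $\langle S-T\mid\varphi\rangle\neq 0$ and using the Lemma of Section~4, $\langle S-T\mid\varphi\rangle=\int_\Omega\kappa(\varphi)\kappa(S-T)\,d\Omega$ is a nonzero real number, which forces $\kappa(S-T)$ to have a non-infinitesimal pseudo-valuation at some level; hence $\kappa(S-T)\notin W_{l,r}^\beta[0]$ for all sufficiently large $r$, so $\kappa(S-T)$ is bounded away from $0$ in the sharp topology uniformly, giving discreteness of the image. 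The uniform lower bound on distances (independent of $S,T$) is what promotes ``discrete'' to ``discrete grid'', and it comes from the fact that the valuation contributed by $\kappa$ of \emph{any} nonzero distribution is controlled by the single mollifier scaling parameter, not by the distribution.

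The main obstacle I anticipate is the $\mathcal D'$ half: unlike $\mathcal C^\infty(\Omega)$, where the embedding is by honest constant nets and the pairwise distance is visibly the constant $1$, for distributions one must rule out that $\kappa(S-T)$ becomes infinitesimal at \emph{every} exhaustion/derivative level simultaneously, and one must produce a \emph{uniform} (in $S,T$) positive lower bound for the sharp distance to legitimately call it a grid rather than merely a discrete set. I would handle this by reducing, via the Section~4 Lemma, to the statement that the map sending a nonzero distribution to the real number $\langle S-T\mid\varphi\rangle$ detects non-nullity, and that the convolution representation $\kappa(S-T)=(S-T)*\rho_\varphi$ has pseudo-valuation at the relevant level equal to a fixed value governed by $\rho$ (so the ``equidistance'' after rescaling the ultrametric, exactly as the paper notes for $\mathbb R\hookrightarrow\widetilde{\mathbb R}$). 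A secondary, purely bookkeeping obstacle is checking that $\kappa$ restricted to $\mathcal D'(\Omega)$ is injective at all --- but this is immediate from the Embedding Theorem, since $\kappa$ is a monomorphism on all of $\mathcal G(\Omega)$ and $\mathcal D'(\Omega)$ injects into $\mathcal G(\Omega)$ by the mollifier construction, so no distinct distributions are identified. Once injectivity and the uniform distance bounds are in hand, Theorem~5.3 (the identification of the sharp topology via $\mathcal B_f$ and $\mathcal B_{f\Omega}$, and of $\widetilde{\mathbb K}_f$ as the constants) lets one read off that the images are discrete, and in the $\mathcal C^\infty$ case an equidistant grid with common distance $1$, completing the proof.
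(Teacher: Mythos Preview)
Your proposal is correct and, for the $\mathcal C^\infty(\Omega)$ half, matches exactly what the paper intends: constant nets have valuation $0$, hence pairwise sharp distance $1$, mirroring $\mathbb R\hookrightarrow\widetilde{\mathbb R}$.

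For the $\mathcal D'(\Omega)$ half your argument is also correct but more elaborate than the paper's. The paper gives no detailed proof; it reduces everything to the single observation that \emph{association is a topological notion}, meaning concretely that $B_1(0)\subset\{f:f\approx 0\}$ in $\mathcal G(\Omega)$ (any element in the unit ball for the sharp metric has all derivatives going to $0$ locally uniformly, hence is associated to $0$). Since it is a standard Colombeau fact that distinct distributions have non-associated images, the contrapositive immediately yields $D(\iota(S),\iota(T))\geq 1$ whenever $S\neq T$, and the discreteness (with uniform separation $1$) follows in one step. Your route via the Section~4 integral lemma $\langle S-T\mid\varphi\rangle=\int_\Omega\kappa(\varphi)\kappa(S-T)\,d\Omega$ is really a proof \emph{of} that non-association fact from scratch: a nonzero real integral has sharp norm $1$, forcing some $D_{l0}\geq 1$, hence $D_{ll}\geq 1$, hence $D\geq 1$. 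So you recover the same conclusion by a more self-contained path.

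The ``obstacle'' you anticipate about producing a uniform lower bound independent of $S,T$ is therefore not a genuine obstacle: it drops out automatically from $B_1(0)\Rightarrow$ association, and your explanation via ``the single mollifier scaling parameter'' is unnecessary (and slightly misleading, since valuations of embedded distributions can be arbitrarily negative). Once you phrase it as the paper does, the uniform bound $\geq 1$ is immediate and there is nothing further to check.
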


 The theorem shows  that classical solutions to differential equations are rare. This can be used as a tool to obtain classical solutions from a generalized solution. References on which this section is based are \cite{aragona1991intrinsic, aragona2005discontinuous, aragonamembranas, aragona2009natural, https://doi.org/10.48550/arxiv.1706.02810, garetto2011hilbert, Giordiano1, juriaans2022fixed, ku}.

\section{Fundamentals of Generalized Geometry}

In this section we extend the constructions made for  subset  $\Omega \subset \mathbb{K}^n$ to abstract manifolds. This is done in  the full milieus.  Starting with a  Riemannian sub-manifold  $M \subset \mathbb{K}^n$   we construct    a generalized sub-manifold   $M^*\subset \widetilde{\mathbb{K}}^n$, whose dimensions over the respective underlying reals  are equal.  The construction is such that $M$ is the shadow of $M^*$, or, in the notation to be introduced in this section, $ssupp(M^*)=M$.  It is here that we piece the puzzle   letting tools and concepts    developed by several prominent researchers fall into place.  In particular, Classical Space-Time can be embedded into Generalized Space-Time and thus  making available tools   that can  be used to explain phenomena in physical reality.  We begin recalling some definitions and preparing the needed full environment machinery we shall be using.

\begin{definition}
Let $M$ be a non-empty set. A ${\cal{C}}^\infty$ $\mathcal G_f$-atlas of dimension $n$ on $M$ is a family $\mathcal A={(U_\lambda,\varphi_\lambda)}_{\lambda\in\Lambda}$, where $\Lambda$ is an index set, that satisfies the following conditions:
\begin{enumerate}
\item For every index $\lambda\in\Lambda$, the map $\varphi_\lambda:U_\lambda\rightarrow \widetilde{\mathbb R}_f^n$ is a bijection between the non-empty open subset $U_\lambda\subset M$ and the open subset $\varphi_\lambda(U_\lambda) \subset\widetilde{\mathbb R}_f^n$;
\item $M=\displaystyle\bigcup\limits_{\lambda\in\Lambda}U_\lambda$;
\item For every pair $\alpha,\beta\in\Lambda$ with $U_{\alpha\beta}=U_\alpha\cap U_\beta\neq\emptyset$, the subsets $\varphi_\alpha(U_{\alpha\beta})$ and $\varphi_\beta(U_{\alpha\beta})$ are open subsets contained in $\widetilde{\mathbb R}_f^n$ such that $\varphi_\beta\circ\varphi_\alpha^{-1}:\varphi_\alpha(U_{\alpha\beta})\rightarrow\varphi_\beta(U_{\alpha\beta})$ is a ${\cal{C}}^\infty$ diffeomorphism.
\end{enumerate}
\end{definition}

The pair $(U_\alpha,\varphi_\alpha)$ is called a local chart (or a coordinate system) of $M$. If $U\subset M$ and $\varphi:U\rightarrow\varphi(U)$ is a homeomorphism, where $\varphi(U)$ is an open subset of $\widetilde{\mathbb R}^n_f$, then the pair $(U,\varphi)$ is said to be compatible with $\mathcal A$ if for every pair $(U_\lambda,\varphi_\lambda)\in\mathcal A$, with $W_\lambda=U\cap U_\lambda\neq\emptyset$, we have that $\varphi\circ\varphi^{-1}_\lambda:\varphi_\lambda(W_\lambda)\rightarrow\varphi(W_\lambda)$ is a ${\cal{C}}^\infty$ diffeomorphism, where $\varphi_\lambda(W_\lambda)$ and $\varphi(W_\lambda)$ are open subsets of $\widetilde{\mathbb R}^n_f$. By  Zorn's Lemma,   there exists a unique maximal ${\cal{C}}^\infty$ $\mathcal G$-atlas $\mathcal A^\ast\supset {\cal{A}}$ of dimension $n$ on $M$, namely, the atlas$$(U,\varphi)\in\mathcal A^\ast, \ \mbox{if and only if} \  (U,\varphi) \ \mbox{is}  \  \mbox{compatible with} \ \mathcal A.$$ 


	A generalized   manifold in the full environment, or a $\mathcal{G}_f$-manifold, is a set $M$ equipped with a $\mathcal{G}_f$-atlas. A maximal $\mathcal{G}_f$-atlas of the $\mathcal G_f$-manifold $M$ is called a $\mathcal{G}_f$-differential structure on  $M$.  The topology of $M$ is the one that makes all local charts simultaneously homeomorphisms. If the context is clear, we shall  omit the prefix $\mathcal{G}_f$.  The proof of our next result can be found in \cite{juriaans2022fixed}.

\begin{theorem}{\bf{[Dimension Invariance]}}
Let $M$ be a $\mathcal G_f$-manifold. Then the dimension of a $\mathcal G_f$-atlas $\mathcal A$ is constant in each connected component of $M$.

\end{theorem}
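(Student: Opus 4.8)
The plan is to reduce the statement to a local, connected-open-set computation and then propagate it along the connectedness of a component. First I would fix a connected component $M_0$ of $M$ and an atlas $\mathcal A=\{(U_\lambda,\varphi_\lambda)\}_{\lambda\in\Lambda}$, and observe that by definition of a $\mathcal G_f$-atlas each transition map $\varphi_\beta\circ\varphi_\alpha^{-1}:\varphi_\alpha(U_{\alpha\beta})\to\varphi_\beta(U_{\alpha\beta})$ is a ${\cal C}^\infty$ diffeomorphism between open subsets of $\widetilde{\mathbb R}_f^{n_\alpha}$ and $\widetilde{\mathbb R}_f^{n_\beta}$ respectively, where $n_\alpha$ is the dimension recorded by the chart at $\alpha$. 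The crux is therefore the following local claim: if there is a ${\cal C}^\infty$ diffeomorphism between a nonempty open subset of $\widetilde{\mathbb R}_f^{m}$ and a nonempty open subset of $\widetilde{\mathbb R}_f^{n}$, then $m=n$. Granting this claim, the theorem follows immediately: on any overlap $U_{\alpha\beta}\neq\emptyset$ we get $n_\alpha=n_\beta$, so the map $\lambda\mapsto n_\lambda$ is locally constant on $M_0$; since $M_0$ is connected and the charts $U_\lambda$ cover it, this map is constant on $M_0$. (One should also check the charts $U_\lambda$ covering a fixed component all have pairwise-chained nonempty overlaps within that component, which is exactly connectedness of $M_0$ expressed through the open cover.)

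For the local claim I would argue via the differential. A ${\cal C}^\infty$ diffeomorphism $\Phi:V\to W$ with ${\cal C}^\infty$ inverse $\Psi$ has, at each point $p\in V$, a generalized derivative $d\Phi_p:\widetilde{\mathbb R}_f^{m}\to\widetilde{\mathbb R}_f^{n}$, represented by the Jacobian matrix $[(\partial\Phi_\varphi/\partial x)(\hat p_\varphi)]$, and the chain rule (which is available since our differentiability extends Newton's and $\kappa$ commutes with derivation) gives $d\Psi_{\Phi(p)}\circ d\Phi_p=\mathrm{id}_{\widetilde{\mathbb R}_f^{m}}$ and $d\Phi_p\circ d\Psi_{\Phi(p)}=\mathrm{id}_{\widetilde{\mathbb R}_f^{n}}$. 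Thus $d\Phi_p$ is an invertible $\widetilde{\mathbb R}_f$-linear map between free modules of ranks $m$ and $n$ over the commutative ring $\widetilde{\mathbb R}_f$. Since $\widetilde{\mathbb R}_f$ is a nonzero commutative ring (in particular it has the invariant basis number property, e.g.\ because by Theorem on $\widetilde{\mathbb R}$ its Jacobson radical is trivial so it embeds into a product of integral domains, hence admits a ring homomorphism to a field), the existence of mutually inverse linear maps $\widetilde{\mathbb R}_f^{m}\to\widetilde{\mathbb R}_f^{n}$ forces $m=n$. Concretely: apply such a homomorphism $\widetilde{\mathbb R}_f\to\mathbb K'$ to the two Jacobian matrices to obtain mutually inverse $\mathbb K'$-matrices of sizes $n\times m$ and $m\times n$, and classical linear algebra over the field $\mathbb K'$ yields $m=n$.

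The main obstacle is making the passage from ``diffeomorphism'' to ``pointwise invertible differential'' fully rigorous in this setting, i.e.\ verifying that the representative Jacobians are moderate, that their product is the identity modulo the null ideal, and that $d\Phi_p$ is genuinely a well-defined $\widetilde{\mathbb R}_f$-linear map (independent of representatives) — this is where the ${\cal C}^\infty$ hypothesis and the $\alpha$-boundedness of all derivatives are used. A secondary point to handle carefully is that ``invertible generalized matrix'' is subtler than over a field because $\widetilde{\mathbb R}_f$ has zero divisors and idempotents; one must ensure the relevant determinant is a unit of $\widetilde{\mathbb R}_f$ rather than merely nonzero, but this is automatic here since $d\Phi_p$ has a genuine two-sided inverse, so its matrix is a unit in $M_m(\widetilde{\mathbb R}_f)$ and its determinant is a unit in $\widetilde{\mathbb R}_f$. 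Since the result is cited from \cite{juriaans2022fixed}, I would keep the exposition brief, stating the local claim as a lemma and referring there for the representative-level bookkeeping.
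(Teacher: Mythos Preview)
The paper does not actually prove this theorem; it states immediately before it that ``The proof of our next result can be found in \cite{juriaans2022fixed}.'' So there is no in-paper argument to compare your proposal against.

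That said, your strategy is sound and is the standard one: reduce to the local claim that a $\mathcal C^\infty$ diffeomorphism between nonempty open subsets of $\widetilde{\mathbb R}_f^{m}$ and $\widetilde{\mathbb R}_f^{n}$ forces $m=n$, obtain mutually inverse $\widetilde{\mathbb R}_f$-linear maps from the chain rule, and conclude via the invariant basis number property of nonzero commutative rings (quotienting by a maximal ideal is the cleanest route; your remark about embedding in a product of integral domains is unnecessary here). The only places requiring genuine care are exactly those you flagged: that the generalized chain rule holds in the full setting so that the Jacobians really are two-sided inverses in $M_n(\widetilde{\mathbb R}_f)$, and that the representative-level bookkeeping (moderateness, independence of representatives) goes through. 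Since you already plan to defer those details to \cite{juriaans2022fixed}, as the paper itself does, your write-up would be entirely appropriate.
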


 The notion of support of a generalized point was introduced in the context of the simplified algebra. Given a point  $p=[(p_\varepsilon)_{\varepsilon}]\in\widetilde{\mathbb K}^n$ one can see easily that 
 
 $$supp(p)=\bigcap\limits_{\eta\in I}\overline{\{p_{\varepsilon}\ : \ \varepsilon\in I_\eta\}}$$
 
As we already saw, the definition can be translated in terms of idempotents as follow: $q\in supp(p)$ if and and only if these exists an idempotent $e\in{\cal{B}}(\widetilde{\mathbb{R}})$ such that $e\cdot p \approx e\cdot q$. In the full environment we have that  $x\in\widetilde{\mathbb K}_f$ is associated with 0,  $x\approx0$, if for a representative  $(x_\varphi)_\varphi$ of $x$, there exists $p\in\mathbb N$ such that $x_{\varphi_\varepsilon}\rightarrow0$ when $\varepsilon \rightarrow 0$, for all $\varphi\in\mathcal A_p(1)$. The identification $x_1\approx x_2$ is equivalent to $x_1-x_2\approx0$. If there exists $x_0\in\mathbb K$ such that $x\approx x_0$, we say that $x_0$ is the associated number (or shadow) of $x$. This   definition  readily   extends to $\widetilde{\mathbb K}_f^n$ in an obvious way.

Let $p=[(p_\varphi)_{\varphi}]\in\widetilde{\mathbb K}_f^n$. The {\it support} of $p$ is defined to be the subset $$supp_f(p):=\{q\in\mathbb K^n: \  \forall \ m\in\mathbb{N}, \  \ \exists \ \varphi\in{\cal{A}}_m \ \mbox{and} \  \exists \ \varepsilon_l\rightarrow0 \ \mbox{such that} \ p_{\varphi_{\varepsilon_{_l}}}\rightarrow q\}$$

 \noindent  and the {\it essential support} of $p$ is defined as

  $$ssupp_f(p)=\bigcap\limits_{m\in \mathbb{N}}\overline{\{p_{\varphi}\ : \ \varphi\in  {\cal{A}}_m\}}$$

\noindent Clearly $supp_f(p)\subset ssupp_f(p)$.  Algebraically we have:   $q_0 \in supp_f(p)$   if and only if there exists  $e\in {\cal{{B}}(\widetilde{\mathbb{R}}}_f)$ such  $e \cdot  p \approx e \cdot q_0$.  From now on, subscripts will be omitted when referring to the support of a point.  If $M$ is a Riemannian manifold and $ p\in \widetilde{M}_{cf}$ then       $ssupp(p)$ is a compact subset of $M$. By construction, $ssupp(\widetilde{M}_{cf})=\overline{M}$.  If $r>0$, then $supp(\alpha^\bullet_r)=\{0\}$.   More generally, if $p\in B_1(0)$ then $supp(p)=\{0\}$, since in this case we have $p\approx0$ (see \cite[Lemma 2.1]{aragona2013algebraic}).   If $p=[(\sin(\alpha_{-1}^\bullet)]$, then $supp(p)=[-1,1]$.  For  $x=[(x_\varphi)_\varphi]\in \widetilde{\mathbb{ R}}^n_f$, write $\|x\|_2=[(\|x_{\varphi}\|_{\mathbb{R}^n})_\varphi ]$, but note that any other norm of $\mathbb{R}^n$ will do.  

 If $(B_\varphi)_{\varphi\in\mathcal {\cal{A}}_0}$ is a net of subsets of $\mathbb R^n$, then the internal set generated by $(B_\varphi)_{\varphi\in{\cal{A}}_0}$ is $[B_\varphi]:=\{x\in \widetilde{\mathbb R}_f^n:\exists \ (x_\varphi)_\varphi \ representative  \ of \ x, \ \exists \ k\in\mathbb N,\ \exists \ \eta_{_\varphi}\in I \ $,  $ \mbox{such that}  \ x_{\varphi_\varepsilon}\in B_{\varphi_\varepsilon} \ \forall \ \varphi\in{\cal{A}}_k\  \mbox{and} \ \varepsilon\in I_{\eta_{\varphi}} \}$.  Internal sets generalize the notion of membranes  and are closed in the sharp topology, $\tau$.  Strong internal sets were first introduced in the simple environments and  are part of the puzzle.

\begin{definition}{\bf{[Strong Internal Sets]}}$\ $
\begin{enumerate}
\item Let $(B_\varphi)_{\varphi\in{\cal{A}}_0}$ be a net of subsets of $ \mathbb R^n$ and let  $(x_\varphi)_{\varphi\in{\cal{A}}_0}$ be a moderated net of points in $\mathbb R^n$. We say that $(x_\varphi)_{\varphi\in{\cal{A}}_0}$ belongs to $(B_\varphi)_{\varphi\in{\cal{A}}_0}$ $\tau$-strongly, and we write $x_\varphi\in_\tau B_\varphi$, if 

\begin{enumerate}

\item There exists $k\in\mathbb N$ such that $x_{\varphi_\varepsilon}\in B_{\varphi_\varepsilon}$ for all $\varphi\in {\cal{A}}_k$ and $\varepsilon$ sufficiently small.

\item If $[(y_\varphi)_\varphi]=[(x_\varphi)_\varphi]$ then there exists $p\in\mathbb N$, $p\geq k$, such that $y_{\varphi_\varepsilon}\in B_{\varphi_\varepsilon}$,  for all $\varphi\in{\cal{A}}_p$ and $\varepsilon$ sufficiently small.

\end{enumerate}

\item   Let $(B_\varphi)_{\varphi\in{\cal{A}}_0}$ be a net of subsets of $\mathbb R^n$.  The strong internal set with respect to $\tau$ ($\tau$-strong internal) generated by $(B_\varphi)_{\varphi\in{\cal{A}}_0}$ is denoted and defined by: $\langle B_\varphi\rangle_\tau:=\{[(x_\varphi)_\varphi]\in \widetilde{\mathbb R}^n_f: x_\varphi\in_\tau B_\varphi\}$.  We  shall omit  $\tau$ in the notation $\langle B_\varphi\rangle_\tau$, call it a strong internal set and  also replace the notation $\in_\tau$ by $\in_\varphi$. 

\end{enumerate}
\end{definition}

  It is straightforward to verify that  $\widetilde{\mathbb R}_f=\langle (-e^{1/i(\varphi)}, e^{1/i(\varphi)})_{\varphi\in {\cal{A}}_0}\rangle$ and that  $\langle A_\varphi\rangle\cap\langle B_\varphi\rangle=\langle A_\varphi\cap B_\varphi\rangle$.

\begin{theorem}
Let $(A_\varphi)_{\varphi\in{\cal{A}}_0}$ be a net of subsets of $\mathbb R^n$, and let  $(x_\varphi)_{\varphi\in{\cal{A}}_0}$  be a moderated net of points in $\mathbb R^n$. Then, 
 $x\in \langle A_\varphi \rangle \Longleftrightarrow $   $dist(x,A^c):=[(d(x_\varphi,A_\varphi^c))_\varphi]\in Inv(\widetilde{\mathbb R}_f)$. 
 \label{internoforte}
\end{theorem}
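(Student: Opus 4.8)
The plan is to prove the two implications separately, unwinding the definition of the $\tau$-strong internal set $\langle A_\varphi\rangle$ and exploiting the characterization of $Inv(\widetilde{\mathbb R}_f)$ from item (3) of the Extended Fundamental Theorem, namely that a generalized real is invertible precisely when its absolute value is bounded below by some $\alpha^\bullet_r$. The quantity $dist(x,A^c)=[(d(x_\varphi,A_\varphi^c))_\varphi]$ is a nonnegative generalized real, so invertibility is equivalent to the existence of $r\in\mathbb R$ and an index $k$ such that $d(x_{\varphi_\varepsilon},A_{\varphi_\varepsilon}^c)\geq i(\varphi)^{r}$ for all $\varphi\in{\cal A}_k$ and $\varepsilon$ small. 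This reformulation is the bridge between the metric content of ``$x$ sits strongly inside $(A_\varphi)$'' and the algebraic content of ``$dist(x,A^c)$ is a unit''.

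For the direction ($\Leftarrow$), I would start from invertibility: pick $r$ and $k$ witnessing $d(x_{\varphi_\varepsilon},A_{\varphi_\varepsilon}^c)\geq i(\varphi)^r$. Condition (a) of being $\tau$-strong is immediate, since a point at positive distance from $A_{\varphi_\varepsilon}^c$ lies in $A_{\varphi_\varepsilon}$. For condition (b), suppose $[(y_\varphi)_\varphi]=[(x_\varphi)_\varphi]$; then the net $(y_\varphi-x_\varphi)$ is negligible, so $\|y_{\varphi_\varepsilon}-x_{\varphi_\varepsilon}\|\leq i(\varphi)^{r+1}$ (say) for all $\varphi\in{\cal A}_p$ with $p$ large enough and $\varepsilon$ small; since $i(\varphi)\to 0$ as we go down the chain, eventually $i(\varphi)^{r+1}<i(\varphi)^r\leq d(x_{\varphi_\varepsilon},A_{\varphi_\varepsilon}^c)$, so by the triangle inequality $y_{\varphi_\varepsilon}$ still lies in $A_{\varphi_\varepsilon}$. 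Hence $x\in\langle A_\varphi\rangle$.

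For the direction ($\Rightarrow$), I would argue by contradiction. Assume $x\in\langle A_\varphi\rangle$ but $dist(x,A^c)\notin Inv(\widetilde{\mathbb R}_f)$. By the invertibility criterion, for every $r\in\mathbb R$ and every $k$ there are $\varphi\in{\cal A}_k$ and arbitrarily small $\varepsilon$ with $d(x_{\varphi_\varepsilon},A_{\varphi_\varepsilon}^c)<i(\varphi)^r$; equivalently, one can extract points $z_{\varphi_\varepsilon}\in A_{\varphi_\varepsilon}^c$ with $\|x_{\varphi_\varepsilon}-z_{\varphi_\varepsilon}\|<i(\varphi)^r$ along such a subnet. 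Using this, I would build a net $(y_\varphi)_\varphi$ that agrees with $(x_\varphi)_\varphi$ as a generalized point — i.e. $(y_\varphi-x_\varphi)$ is negligible, so one must push the perturbations below every $i(\varphi)^n$ — yet has $y_{\varphi_\varepsilon}\in A_{\varphi_\varepsilon}^c$ for a cofinal family of indices $\varphi$ in every ${\cal A}_p$, contradicting condition (b) of $\tau$-strong membership. This is where the ``strong'' in strong internal set is doing real work: ordinary internal membership would only give condition (a), which is too weak to force invertibility.

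The main obstacle is precisely this last construction in the ($\Rightarrow$) direction: I need the failure of invertibility — which a priori only gives, for each $r$, \emph{some} bad index in \emph{some} ${\cal A}_k$ — to be organized into a single representative-independent choice of net $(y_\varphi)$ that violates (b). The delicate point is the interplay of the two quantifiers: going down the chain ${\cal A}_0\supset{\cal A}_1\supset\cdots$ (which shrinks $i(\varphi)$) against the requirement that the perturbation be negligible (smaller than $i(\varphi)^n$ for \emph{all} $n$), while still landing in $A_{\varphi_\varepsilon}^c$. I expect this to go through by a diagonal extraction: enumerate the levels $p$, and for the $p$-th level use the non-invertibility with $r=p$ to select a bad $\varphi^{(p)}\in{\cal A}_p$ and a witnessing perturbation of size $<i(\varphi^{(p)})^p$, then define $y_\varphi=z_\varphi$ on the selected sparse set of indices and $y_\varphi=x_\varphi$ elsewhere; moderateness of $(y_\varphi)$ follows from that of $(x_\varphi)$ together with the smallness of the perturbations, and $(y_\varphi-x_\varphi)$ is negligible by construction. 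One should also check the harmless reduction that $dist(x,A^c)$, being a nonnegative generalized real, is invertible iff it is bounded below by a gauge power, which is exactly item (3) applied to $|dist(x,A^c)|=dist(x,A^c)$.
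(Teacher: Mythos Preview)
Your proposal is correct and follows the same two-step strategy as the paper: ($\Leftarrow$) via the lower bound $d(x_{\varphi_\varepsilon},A_{\varphi_\varepsilon}^c)\geq \alpha^\bullet_r(\varphi_\varepsilon)$ and a triangle-inequality argument on a negligible perturbation, and ($\Rightarrow$) by contradiction, manufacturing an equivalent net $(y_\varphi)$ that lands in $A_{\varphi_\varepsilon}^c$ cofinally, violating condition (b) of $\tau$-strong membership. The paper's proof is more compressed (and outsources the invertibility criterion to \cite[Theorem 3.18]{aragona2013algebraic}), but the architecture is identical.

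One point to tighten in your diagonal: as written, you select for each level $p$ a single pair $(\varphi^{(p)},\varepsilon_p)$ and perturb there. But condition (b) reads ``there exists $p$ such that for all $\varphi\in{\cal A}_p$, $y_{\varphi_\varepsilon}\in A_{\varphi_\varepsilon}$ for $\varepsilon$ sufficiently small'', with the threshold $\eta_\varphi$ allowed to depend on $\varphi$. A perturbation at just one $\varepsilon_p$ per $\varphi^{(p)}$ does not falsify this: one can simply take $\eta_{\varphi^{(p)}}<\varepsilon_p$. The paper sidesteps this by fixing, at each level $p$, a single $\varphi\in{\cal A}_p$ and extracting an entire sequence $\varepsilon_k\to 0$ with $d(x_{\varphi_{\varepsilon_k}},A_{\varphi_{\varepsilon_k}}^c)\leq (i(\varphi)\varepsilon_k)^k$, then setting $y_{\varphi_{\varepsilon_k}}=y_k\in A_{\varphi_{\varepsilon_k}}^c$ for all $k$. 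Your framework accommodates this with no extra effort --- the non-invertibility hypothesis already supplies arbitrarily small bad $\varepsilon$ for the chosen $\varphi$ --- so this is a refinement of bookkeeping, not a change of method.
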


\begin{proof}
Consider $x_\varphi\in_\varphi A_\varphi$ and suppose that $\forall \ p\in\mathbb N \ \exists \ \varphi\in{\cal{A}}_p$ and $\varepsilon_k\rightarrow0$ such that $d(x_{\varphi_{_{\varepsilon_k}}},A_{\varphi_{_{\varepsilon_k}}}^c)\leq i(\varphi)^k\varepsilon_k^k, \ \forall \ k\in\mathbb N$. Then, for each $k\in\mathbb N$, we can choose $y_k\in A_{\varphi_{{\varepsilon_k}}}^c$, such that $\|y_k-x_{\varphi_{_{\varepsilon_k}}}\|<2i(\varphi)^k\varepsilon_k^k, \ \forall \ k\in\mathbb N   $.


Choose $(y_\varphi)_\varphi$ equivalent to $(x_\varphi)_\varphi$, so that for such $\varphi's$ that we assume exist, we have $y_{\varphi_{_{\varepsilon_k}}}=y_k, \ \forall \ k\in\mathbb N$. Thus, $y_{\varphi_{{\varepsilon_k}}}\notin A_{\varphi_{{\varepsilon_k}}}$ $\forall \ k\in\mathbb N$, a contradiction. On the other hand, if the hypothesis of the converse implication holds, then $x_{\varphi_\varepsilon}\in A_{\varphi_\varepsilon}$, $\forall \ \varphi\in{\cal{A}}_p$ and $\varepsilon$ sufficiently small. Moreover, if $(y_\varphi)_\varphi$ is equivalent to $(x_\varphi)_\varphi$, then $d(y_{\varphi_\varepsilon},A_{\varphi_\varepsilon}^c)>\frac{1}{2}\alpha^\bullet_r(\varphi_\varepsilon)$ and therefore, $y_{\varphi_\varepsilon}\in A_{\varphi_\varepsilon}, \ \forall \ \varphi\in{\cal{A}}_p \ \mbox{and}\   \varepsilon \ \mbox{small  enough}$. Using \cite[Theorem 3.18]{aragona2013algebraic}, the result follows.
\end{proof}

\begin{corollary}
$\langle A_\varphi\rangle$ is open in the sharp topology. \label{intforteaberto}
\end{corollary}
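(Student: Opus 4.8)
The plan is to deduce the corollary directly from Theorem~\ref{internoforte}, which identifies $\langle A_\varphi\rangle$ with the preimage, under the map $x\mapsto dist(x,A^c)$, of the open set $Inv(\widetilde{\mathbb{R}}_f)$. So the proof reduces to two ingredients: (i) that $Inv(\widetilde{\mathbb{R}}_f)$ is open in the sharp topology, and (ii) that the distance-to-complement map $x\mapsto [(d(x_\varphi,A_\varphi^c))_\varphi]$ is continuous from $(\widetilde{\mathbb{R}}_f^n,\text{sharp})$ to $(\widetilde{\mathbb{R}}_f,\text{sharp})$. Openness of $Inv$ is exactly item (2) of the Extended Fundamental Theorem (stated there for $\widetilde{\mathbb{R}}$, and noted earlier in the paper to hold verbatim in the full setting $\widetilde{\mathbb{R}}_f$); I would simply cite it. So the only thing requiring a word is the continuity of the distance map.

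For that, I would argue at the level of representatives using the $1$-Lipschitz property of the Euclidean distance-to-a-set function: for any fixed subset $S\subset\mathbb{R}^n$ one has $|d(u,S)-d(w,S)|\le \|u-w\|$ for all $u,w\in\mathbb{R}^n$. Applying this componentwise with $S=A_{\varphi_\varepsilon}^c$ gives, for nets $(x_\varphi)_\varphi$, $(y_\varphi)_\varphi$,
$$\big|\,d(x_{\varphi_\varepsilon},A_{\varphi_\varepsilon}^c)-d(y_{\varphi_\varepsilon},A_{\varphi_\varepsilon}^c)\,\big|\le \|x_{\varphi_\varepsilon}-y_{\varphi_\varepsilon}\|,$$
hence passing to classes $\big|\,dist(x,A^c)-dist(y,A^c)\,\big|\le \|x-y\|_2$ in $\widetilde{\mathbb{R}}_f$. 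This inequality shows the map is nonexpansive for the ultrametric $D$, hence (sharp)-continuous; in particular $\langle A_\varphi\rangle = \{x : dist(x,A^c)\in Inv(\widetilde{\mathbb{R}}_f)\}$ is the preimage of an open set under a continuous map, therefore open.

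Alternatively, and perhaps more in keeping with the concrete style of the surrounding text, I would give a direct ball argument: if $x\in\langle A_\varphi\rangle$ then by Theorem~\ref{internoforte} there is $r\in\mathbb{R}$ with $dist(x,A^c)\ge\alpha^\bullet_r$, i.e.\ $d(x_{\varphi_\varepsilon},A_{\varphi_\varepsilon}^c)\ge\alpha^\bullet_r(\varphi_\varepsilon)$ for $\varphi$ in some ${\cal{A}}_k$ and $\varepsilon$ small. Pick $t>r$ and take any $y\in V_t[x]$, so $\|x-y\|\le\alpha^\bullet_t<\alpha^\bullet_r$ eventually; then the Lipschitz estimate gives $d(y_{\varphi_\varepsilon},A_{\varphi_\varepsilon}^c)\ge \alpha^\bullet_r(\varphi_\varepsilon)-\alpha^\bullet_t(\varphi_\varepsilon)>\tfrac12\alpha^\bullet_r(\varphi_\varepsilon)$ eventually, so $dist(y,A^c)\in Inv(\widetilde{\mathbb{R}}_f)$ and $y\in\langle A_\varphi\rangle$ again by Theorem~\ref{internoforte}. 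Thus $V_t[x]\subset\langle A_\varphi\rangle$ and the set is open.

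I do not foresee a genuine obstacle here; the statement is a soft corollary of the representative-level characterization just proved. The one point to be a little careful about is the quantifier bookkeeping inherent to the full environment — the index $k\in\mathbb{N}$ from ${\cal{A}}_k$ and the fact that membership in a strong internal set is representative-independent — but Theorem~\ref{internoforte} has already absorbed exactly that subtlety, so invoking it lets the argument stay at the level of a one-line Lipschitz estimate plus openness of $Inv(\widetilde{\mathbb{R}}_f)$.
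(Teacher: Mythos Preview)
Your proposal is correct. The paper states this corollary without proof, treating it as immediate from Theorem~\ref{internoforte}; your argument via the $1$-Lipschitz property of $x\mapsto dist(x,A^c)$ together with the openness of $Inv(\widetilde{\mathbb{R}}_f)$ is precisely the natural way to make that implicit step explicit, and the alternative direct ball argument you give is equally valid.
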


Let $M$ be a connected sub-manifold of dimension $n$ in $\mathbb R^N$, and let ${(U_\alpha,\phi_\alpha),\alpha\in\Lambda}$ be an atlas of $M$. Suppose that $\forall \ \alpha\in\Lambda$, $\phi_\alpha(U_\alpha)=\Omega_0=B_r(0)$, for some $r>0$. Consider $\widetilde{M}_{c,f}\subset\widetilde{\mathbb R}^N_{c,f}\subset \widetilde{\mathbb R}^N_{f}$ the set of compactly supported points defined by $M$. Define  $\widetilde{\Lambda}:=\{\lambda:{\cal{A}}_0\rightarrow\Lambda\}={\cal{F}}({\cal{A}}_0,\Lambda)$.   Each $\lambda\in\widetilde{\Lambda}$ is associated with a net  $(\lambda_\varphi)_{\varphi\in{\cal{A}}_0}$  of elements from $\Lambda$, where $\lambda_\varphi=\lambda(\varphi)$. For $\lambda\in\widetilde{\Lambda}$, define $U_\lambda:=\langle U_{\lambda_\varphi}\rangle\subset \widetilde{\mathbb R}^N_{c,f}$ and define  
$$
\begin{aligned}
 \phi_{\lambda}: U_{\lambda} & \longrightarrow \widetilde{\mathbb R}^n_{c,f} \\ [(p_\varphi)_\varphi] & \longmapsto [(\phi_{\lambda_\varphi}(p_\varphi))_\varphi].
 \end{aligned}
 $$



 \begin{theorem}
Suppose that for each $\alpha \in \Lambda$, the map $\phi_\alpha$ and its inverse are Lipschitz. Then $\widetilde{M}_{c,f}$,  with the induced topology,  is a $\mathcal{G}_f$-manifold of $\widetilde{\mathbb{R}}^N_f$ of dimension $n$.
\end{theorem}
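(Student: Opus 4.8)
The plan is to verify the three atlas axioms from the definition of a $\mathcal{G}_f$-manifold for the family $\{(U_\lambda,\phi_\lambda)\}_{\lambda\in\widetilde\Lambda}$, exploiting the fact that each $U_\lambda=\langle U_{\lambda_\varphi}\rangle$ is a strong internal set, hence open in the sharp topology by Corollary \ref{intforteaberto}, and that each chart map $\phi_\lambda$ is assembled component-wise from the Lipschitz charts $\phi_\alpha$. First I would check that $\phi_\lambda$ is well defined and injective on $U_\lambda$: moderateness of the image net follows from the uniform Lipschitz bound on the $\phi_\alpha$ together with compact support of the points, and independence of the representative is exactly condition (b) in the definition of a strong internal set (an equivalent net stays inside $U_{\lambda_\varphi}$ eventually on a cofinal family $\mathcal A_p$), so that $\phi_{\lambda_\varphi}(y_\varphi)=\phi_{\lambda_\varphi}(p_\varphi)$ eventually and the class is unchanged; injectivity comes from the Lipschitz inverses, which give $\|p_\varphi-q_\varphi\|\le C\|\phi_{\lambda_\varphi}(p_\varphi)-\phi_{\lambda_\varphi}(q_\varphi)\|$ eventually, so equal images force $[(p_\varphi)]=[(q_\varphi)]$. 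Surjectivity onto an open subset of $\widetilde{\mathbb R}^n_{c,f}$ follows because $\phi_\alpha(U_\alpha)=B_r(0)$ for all $\alpha$, so $\phi_\lambda(U_\lambda)=\langle B_r(0)\rangle$, which is a strong internal set and therefore open.

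Next I would establish the covering property $\widetilde M_{c,f}=\bigcup_{\lambda\in\widetilde\Lambda}U_\lambda$. Given $p=[(p_\varphi)_\varphi]\in\widetilde M_{c,f}$, for each $\varphi$ pick $\lambda_\varphi\in\Lambda$ with $p_{\varphi_\varepsilon}\in U_{\lambda_\varphi}$ for small $\varepsilon$; the subtle point is that membership must be upgraded to \emph{strong} membership, i.e.\ one must also control equivalent representatives. Here I would use Theorem \ref{internoforte}: it suffices to show $\mathrm{dist}(p,U_{\lambda_\varphi}^c)\in Inv(\widetilde{\mathbb R}_f)$, and by choosing $\lambda_\varphi$ so that $p_{\varphi_\varepsilon}$ lies well inside the chart domain (using that $M$ is a manifold, so each point has a neighbourhood with positive distance to the boundary of some chart, on the compact set $ssupp(p)$), the distance net is bounded below by some $\alpha^\bullet_r$, giving invertibility. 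This is the step I expect to be the main obstacle, since it requires a careful quantitative choice of the chart index depending on $\varphi$ and on the (compact, by the remark after Dimension Invariance) essential support of $p$, to guarantee the invertibility of the distance rather than mere eventual membership.

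Finally I would verify the cocycle/compatibility condition: for $\lambda,\mu\in\widetilde\Lambda$ with $U_{\lambda\mu}=U_\lambda\cap U_\mu\neq\emptyset$, the intersection equals $\langle U_{\lambda_\varphi}\cap U_{\mu_\varphi}\rangle$ by the identity $\langle A_\varphi\rangle\cap\langle B_\varphi\rangle=\langle A_\varphi\cap B_\varphi\rangle$ recorded just before Theorem \ref{internoforte}, hence is open, and likewise $\phi_\lambda(U_{\lambda\mu})$ is a strong internal set, hence open. The transition map $\phi_\mu\circ\phi_\lambda^{-1}$ has representative $(\phi_{\mu_\varphi}\circ\phi_{\lambda_\varphi}^{-1})_\varphi$, a net of classical ${\cal C}^\infty$ diffeomorphisms between open subsets of $\mathbb R^n$; I would invoke the Embedding Theorem of Section 3 (extended to the full setting, $\kappa:\mathcal G_f(\cdot)\to{\cal C}^\infty(\widetilde{(\cdot)}_{cf},\widetilde{\mathbb R}_f)$, commuting with $\partial^\beta$) to conclude that this net induces a ${\cal C}^\infty$ map whose inverse is induced by $(\phi_{\lambda_\varphi}\circ\phi_{\mu_\varphi}^{-1})_\varphi$, so it is a ${\cal C}^\infty$ diffeomorphism in the generalized sense. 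The dimension is $n$ because every chart takes values in $\widetilde{\mathbb R}^n_{c,f}$ and $M$ is connected, so Dimension Invariance applies; and the induced topology on $\widetilde M_{c,f}$ coincides with the one making all $\phi_\lambda$ homeomorphisms since the $U_\lambda$ are already sharp-open and the $\phi_\lambda$ are sharp-homeomorphisms onto sharp-open sets. Assembling these three verifications gives the claim.
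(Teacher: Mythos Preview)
Your outline has the right skeleton but misses the key structural ingredient of the paper's proof: the atlas is built only from indices $\lambda\in\widetilde\Lambda$ of \emph{finite range}, and this is what makes everything else work. The hypothesis gives only that each individual $\phi_\alpha$ is Lipschitz, not a uniform Lipschitz bound over $\alpha\in\Lambda$; so when you invoke ``the uniform Lipschitz bound on the $\phi_\alpha$'' you are assuming something that is not there. For a general $\lambda$ the Lipschitz constants $C_{\lambda_\varphi}$ can blow up, and then neither independence of the representative for $\phi_\lambda$ nor moderateness of the derivatives of the transition net $(\phi_{\mu_\varphi}\circ\phi_{\lambda_\varphi}^{-1})_\varphi$ is guaranteed, so your appeal to the Embedding Theorem for the ${\cal C}^\infty$ compatibility breaks down. (There is also a smaller slip: condition~(b) in the definition of a strong internal set says an equivalent net $(y_\varphi)$ \emph{stays in} $U_{\lambda_\varphi}$ eventually, not that $\phi_{\lambda_\varphi}(y_\varphi)=\phi_{\lambda_\varphi}(p_\varphi)$; the two values differ, and you still need the Lipschitz estimate to conclude the difference is negligible.)

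The paper produces the needed finite range by a Lebesgue-number argument on the compact set $ssupp(p)$: cover $ssupp(p)$ by finitely many $U_{\alpha}$, take a Lebesgue number $\delta$, cover $ssupp(p)$ by balls $B_{\delta_1}(q_i)$ with $\delta_1<\delta/4$, and then define $\lambda_\varphi$ to be the first $\alpha_{q_i}$ for which $p_\varphi\in B_{\delta_1}(q_i)$. A contradiction argument against the definition of $ssupp(p)$ shows this assignment is eventually everywhere defined, and the margin $\delta_1$ together with Theorem~\ref{internoforte} gives $p\in U_\lambda$. With $\lambda$ ranging over only finitely many values, the Lipschitz constants and all derivative bounds are automatically uniform, so $\phi_\lambda$ is well defined (indeed a sharp isometry), and each transition map is a \emph{finite} interleaving $\sum_i e_i f_i$ of classical ${\cal C}^\infty$ diffeomorphisms, hence ${\cal C}^\infty$. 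Your covering paragraph gestures toward using compactness of $ssupp(p)$, which is the right instinct, but you need to push it all the way to finite range rather than just lower-bounding the distance net.
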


\begin{proof}
Let  $p=[(p_\varphi)_\varphi]\in \widetilde{M}_{c,f}$. We shall construct a local chart containing  $p$.  In  fact,  since $ssupp(p)$ is a compact subset of $M$, there exists a finite subset $I_0\subset \Lambda$ such that  $ssupp(p)\subset\bigcup\limits_{\alpha \in I_{0}} U_{\alpha}$. Let $\delta$ be the Lebesgue number for this covering. Choose a finite number of points $q_i\in ssupp(p)$ such that $ssupp(p)\subset\bigcup\limits_{1\leq i\leq l} B_{\delta_1}(q_i)$, where $\delta_1<\delta/4$. Starting with $q_1$, define $\lambda\in\widetilde{\Lambda}$ by $\lambda_{\varphi}:=\alpha_{q_{1}}$ where $\alpha_{q_{1}}$ is chosen such that $B_{\delta_{1}}(q_{1}) \subset U_{\alpha_{q_{1}}}$ and $p_\varphi\in B_{\delta_{1}}(q_{1})$. For $\lambda_\varphi$ not yet defined, continue this process defining $\lambda_{\varphi}:=\alpha_{q_{2}}$ where $\alpha_{q_{2}}$ is chosen such that $B_{\delta_{1}}(q_{2}) \subset U_{\alpha_{q_{2}}}$ and $p_\varphi\in B_{\delta_{1}}(q_{2})$ until completing it by defining $\lambda_{\varphi}:=\alpha_{q_{l}}$. At the end of this process, $\lambda$ will be well defined. In fact, if it were not so, then $\forall \ m\in\mathbb N$, there should exist a sequence   $(\varphi_k)\in {\cal{A}}_{m_k}, \ m_k\uparrow\infty$ and a sequence $\varepsilon_k\rightarrow0$ such that $p_{(\varphi_k)_{{\varepsilon_{k}}}}\notin B_{\delta_{1}}(q_{i})$ for any   $1\leq i\leq l$, and for sufficiently large values of $k$. This is a contradiction, since  the sequence $(p_{(\varphi_k)_{\varepsilon_{k}}})$ has an accumulation point in $ssupp(p)$ and the latter is covert by the balls. Therefore, $\lambda$ is well-defined and has  finite image. Furthermore, $p\in U_\lambda$. To see this, we need to show that $p_\varphi\in_\varphi U_{\lambda_\varphi}$. But this follows directly from Theorem~\ref{internoforte} and the definition of $\lambda$, since by the definition of $\lambda$, we have $p_\varphi\in B_{\delta_{1}}(q_{_i}) \subset U_{\alpha_{q_{_i}}}$ for some $i$, by Theorem \ref{internoforte} and appropriate choice of $\delta_1$, we have that 
$[(d(p_\varphi,U_{\lambda_\varphi}^c))_\varphi]$ is invertible in $\widetilde{\mathbb R}_f$. 

We shall now prove that the family $ \{ ( U_{\lambda}, \phi_{\lambda}), \lambda \in {\widetilde{\Lambda}}\}$, with $\lambda$ of finite range, is a ${\cal{C}}^{\infty} $  $\mathcal G_f-$atlas of dimension $n$ for $\widetilde{M}_{c,f}$.  In fact, it is clear that $\phi_\lambda$ is well-defined for each $\lambda$, i.e., it does not depend on representatives and $\phi_\lambda(p)\in\langle\Omega_0\rangle$,  $p\in U_\lambda$. This follows directly from the Lipschitz condition of the charts of $M$ and their inverses, and from $\lambda$ having finite range. It also follows that each $\phi_\lambda$ is an isometry with respect to the sharp topologies of $\widetilde{\mathbb R}^N_f$ and $\widetilde{\mathbb R}^n_f$, and is therefore bijective and continuous. By Corollary~\ref{intforteaberto}, we already have that each $U_\lambda$ and $\langle\Omega_0\rangle$ are open in the sharp topology. It is clear that $\widetilde{M}_{c,f}=\displaystyle\bigcup_{\lambda}U_\lambda$. 

Any change of coordinates $\phi_\beta\circ\phi_\lambda^{-1}$ is a homeomorphism that has a representative consisting of finitely many ${\cal{C}}^\infty$ diffeomorphisms that take values in a bounded subset of $\mathbb R^n$, so $\phi_\beta\circ\phi_\lambda^{-1}$ is a ${\cal{C}}^\infty$ diffeomorphism from the open set $\phi_\lambda(U_{\lambda\beta})$ to the open set $\phi_\beta(U_{\lambda\beta})$. Note that we can write $\phi_\beta\circ\phi_\lambda^{-1}$ as a finite interleaving
$$\phi_\beta\circ\phi_\lambda^{-1}=\sum\limits_{i}e_if_i$$ 

\noindent where each $f_i$ is a ${\cal{C}}^\infty$ diffeomorphism. Finally, for each $\lambda_\varphi$, there exists an open set $U^{\lambda_\varphi}$ in $\mathbb R^N$ such that $U_{\lambda_\varphi}=M\cap U^{\lambda_\varphi}$. Defining $U^\lambda=\langle U^{\lambda_\varphi}\rangle$, it follows that $U_\lambda=\widetilde{M}_{c,f}\cap U^\lambda$, with $U^\lambda$ an open subset of $\widetilde{\mathbb R}^N_f$. This proves that $\widetilde{M}_{c,f}$ has  the induced topology of $\widetilde{\mathbb R}^N_f$, and concludes the proof.
\end{proof}

\begin{corollary}
Each local chart $(U_\lambda,\phi_\lambda)$ is an isometry in the sharp topologies.
\end{corollary}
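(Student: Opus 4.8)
The plan is to derive the isometry property directly from the uniform Lipschitz bounds available for the finitely many classical charts out of which a representative of $\phi_\lambda$ is assembled. First I would recall, from the construction in the preceding theorem, that every index $\lambda\in\widetilde{\Lambda}$ occurring in the atlas of $\widetilde M_{c,f}$ has \emph{finite range}; hence a representative $(\phi_{\lambda_\varphi})_{\varphi\in{\cal A}_0}$ of $\phi_\lambda$ takes only finitely many values among the charts $\{\phi_\alpha:\alpha\in\Lambda\}$, and likewise $(\phi_{\lambda_\varphi}^{-1})_\varphi$ is a representative of $\phi_\lambda^{-1}$ built from the corresponding finitely many inverse charts. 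Since, by hypothesis, each of these finitely many maps and its inverse is Lipschitz, there is a single real constant $L\ge 1$ serving as a Lipschitz constant for all of them simultaneously.

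Next I would transfer this to the valuation. For $p=[(p_\varphi)_\varphi],q=[(q_\varphi)_\varphi]\in U_\lambda$, set $a:=[(\|\phi_{\lambda_\varphi}(p_\varphi)-\phi_{\lambda_\varphi}(q_\varphi)\|)_\varphi]$ and $b:=[(\|p_\varphi-q_\varphi\|)_\varphi]$ in $\widetilde{\mathbb R}_f$, where $\|\cdot\|$ is a fixed norm on Euclidean space; then the sharp distances are $\|\phi_\lambda(p)-\phi_\lambda(q)\|=e^{-V(a)}$ and $\|p-q\|=e^{-V(b)}$. The Lipschitz bound gives $0\le a\le L\,b$, first pointwise in $\varphi$ and $\varepsilon$ on some ${\cal A}_k$, hence in $\widetilde{\mathbb R}_f$. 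The elementary fact I would use is that $0\le a\le Lb$ forces $V(a)\ge V(b)$: if $\alpha^\bullet_r b\approx 0$ then $0\le\alpha^\bullet_r a\le L\,\alpha^\bullet_r b\approx 0$, so $A(b)\subset A(a)$ and $V(a)=\sup A(a)\ge\sup A(b)=V(b)$; here one only uses that a real multiple of an infinitesimal is again infinitesimal, equivalently $\|Lx\|=\|x\|$ for $L\in\mathbb R^*$. Consequently $\|\phi_\lambda(p)-\phi_\lambda(q)\|\le\|p-q\|$, i.e. $\phi_\lambda$ is non-expanding for the sharp metrics.

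Finally I would repeat the identical argument with the representative $(\phi_{\lambda_\varphi}^{-1})_\varphi$ of $\phi_\lambda^{-1}$, obtaining $\|\phi_\lambda^{-1}(x)-\phi_\lambda^{-1}(y)\|\le\|x-y\|$ for all $x,y\in\phi_\lambda(U_\lambda)$, and then substitute $x=\phi_\lambda(p)$, $y=\phi_\lambda(q)$ — legitimate because $\phi_\lambda$ is a bijection, as established in the theorem — to get the reverse inequality $\|p-q\|\le\|\phi_\lambda(p)-\phi_\lambda(q)\|$. Together these give $\|\phi_\lambda(p)-\phi_\lambda(q)\|=\|p-q\|$, which is the asserted isometry, and continuity of $\phi_\lambda$ and $\phi_\lambda^{-1}$ is recovered as a by-product. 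The only point that needs care is the uniformity of $L$ over $\varphi$, which is exactly why the atlas was built using indices $\lambda$ of finite range; once that is granted the whole argument reduces to the valuation estimate above, so I do not anticipate a serious obstacle.
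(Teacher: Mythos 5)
Your argument is correct, and it is essentially an expanded version of the mechanism the paper itself relies on: in the proof of the preceding theorem the authors assert that each $\phi_\lambda$ is a sharp isometry ``directly from the Lipschitz condition of the charts of $M$ and their inverses, and from $\lambda$ having finite range,'' and your valuation estimate (a uniform bi-Lipschitz constant $L$ gives $0\le a\le Lb$, hence $A(b)\subset A(a)$ and $V(a)\ge V(b)$, so real constants are invisible to the ultrametric) is precisely the missing computation behind that assertion; applying it to $\phi_\lambda$ and to $\phi_\lambda^{-1}$ gives the two inequalities and hence equality of sharp distances. The one point where the paper's proof of the corollary differs is that it additionally invokes Lemma A.1 of Vernaeve's paper, which gives $\|p-q\|\le \mathrm{dist}_M(p,q)\le C\|p-q\|$ on compact subsets $K\subset M$: this is used to identify the sharp metric coming from the intrinsic Riemannian distance of $M$ with the one induced by the ambient norm of $\widetilde{\mathbb R}^N_f$, so that ``isometry in the sharp topologies'' holds regardless of which of the two distances one uses on $\widetilde M_{c,f}$. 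Your proof settles the statement for the ambient (Euclidean) sharp metric only; if the intrinsic reading is intended, the gap is closed by exactly your own valuation argument applied to the bi-Lipschitz comparison above, since the constant $C$ is likewise killed by the valuation (the compactness of $ssupp(p)$, used in the theorem's proof, supplies the compact set $K$ and hence the uniform $C$). So the approaches coincide in substance; yours makes the key estimate explicit, while the paper's buys, via the cited lemma, the equivalence of the Riemannian and ambient formulations.
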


\begin{proof}
In fact, by lemma A.1 of \cite{vernaeve2011isomorphisms}, for each compact subset $K\subset M$, there exists $C>0$ such that $\|p-q\|\leq dist_M(p,q)\leq C\|p-q\|$, for all $p,q\in K$, where $dist_M$ is the Riemannian metric of $M$. This implies that the local charts are isometries in  the sharp topology, since,  from the above proof, each one is an isometry with respect to the sharp topologies of $\widetilde{\mathbb R}^N_f$ and $\widetilde{\mathbb R}^n_f$.
\end{proof}

If $\lambda$ is constant then $(U_{\lambda},\varphi) $  is called a {\it principal chart}. For $\lambda \in \widetilde{\Lambda}\subset \widetilde{{\mathbb{K}}}_f$ and $x \in B_1(0)$  define  $x\lambda(\varphi) = x(\varphi)\lambda(\varphi)$.  In particular, if  $x=e\in {\cal{B}}(\widetilde{\mathbb{R}}_f)$ then $e\lambda $  means  that when $e(\varphi)  = 0$  this index must  be omitted. With this notation, the proof of the theorem gives us the following corollary and thus, once again, making the connection with the notion of interleaving. 

\begin{corollary} Let $(U_{\lambda},\varphi)$  be a local chart with  $\lambda$ of finite range. There exist principal charts $(U_{\alpha_i},\varphi_i) , i = 1, k$  and a complete set of mutually orthogonal idempotents $e_1, \cdots , e_k\in{\cal{B}}(\widetilde{\mathbb{R}}_f)$ such that $ e_i\cdot U_{\lambda}\subset e_i\cdot  U_{\alpha_i}$   and $U_{\lambda} = \bigsqcup_i  U_{e_i\alpha_i}$, where $\bigsqcup$ should be understood as some kind of interleaving. 

 \end{corollary}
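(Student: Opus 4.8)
The plan is to unwind the construction carried out in the proof of the preceding theorem, where a local chart $(U_\lambda,\phi_\lambda)$ with $\lambda$ of finite range was built, and to package the finitely many principal charts that appear there together with a suitable complete orthogonal system of idempotents. Since $\lambda\in\widetilde\Lambda$ has finite image, say $\lambda({\cal A}_0)=\{\alpha_1,\dots,\alpha_k\}\subset\Lambda$, the first step is to define, for each $i$, the set $S_i:=\{\varphi\in{\cal A}_0 : \lambda_\varphi=\alpha_i\}$; these partition ${\cal A}_0$. I would then set $e_i:=\chi_{S_i}$, the idempotent of ${\cal B}(\widetilde{\mathbb R}_f)$ associated to $S_i$ in the sense of part (4) of the Extended Fundamental Theorem (adapted to the full setting), after checking that each $S_i$ does give a nontrivial idempotent or, where it is trivial, absorbing it harmlessly. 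By construction $\sum_i e_i=1$ and $e_ie_j=0$ for $i\neq j$, so $\{e_1,\dots,e_k\}$ is a complete set of mutually orthogonal idempotents.

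\textbf{Next}, for each $i$ I would take $(U_{\alpha_i},\phi_{\alpha_i})$ to be the principal chart obtained from the constant function $\varphi\mapsto\alpha_i$, i.e.\ $U_{\alpha_i}=\langle U_{\alpha_i}\rangle$ (the strong internal set generated by the constant net $(U_{\alpha_i})_\varphi$) with the associated $\phi_i=[(\phi_{\alpha_i}(\cdot))_\varphi]$. The key verification is the inclusion $e_i\cdot U_\lambda\subset e_i\cdot U_{\alpha_i}$: if $p=[(p_\varphi)_\varphi]\in U_\lambda$ then by Theorem~\ref{internoforte} $[(d(p_\varphi,U_{\lambda_\varphi}^c))_\varphi]$ is invertible; multiplying by $e_i$ restricts attention to indices $\varphi\in S_i$, where by definition $U_{\lambda_\varphi}=U_{\alpha_i}$, so $e_i\cdot[(d(p_\varphi,U_{\alpha_i}^c))_\varphi]$ is invertible in $e_i\widetilde{\mathbb R}_f$, which by the $e_i$-relativized version of Theorem~\ref{internoforte} says precisely that $e_i\cdot p\in e_i\cdot U_{\alpha_i}$. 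The decomposition $U_\lambda=\bigsqcup_i U_{e_i\alpha_i}$ then follows by observing that a point $p\in U_\lambda$ is recovered from its pieces $e_i\cdot p$ via the interleaving $p=\sum_i e_i\cdot p$, exactly as the change-of-coordinates map was written as a finite interleaving $\phi_\beta\circ\phi_\lambda^{-1}=\sum_i e_if_i$ in the proof of the theorem; here $U_{e_i\alpha_i}$ is the set obtained from the net that equals $U_{\alpha_i}$ on $S_i$ and is omitted elsewhere, matching the convention $e\lambda$ introduced just before the statement.

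\textbf{The main obstacle} I anticipate is bookkeeping rather than depth: one must be careful that the idempotents $e_i=\chi_{S_i}$ genuinely lie in ${\cal B}(\widetilde{\mathbb R}_f)$ — that is, that each relevant $S_i$ satisfies the membrane/filter condition so that $\chi_{S_i}$ is a well-defined element of the full ring — and that the ``omit this index'' convention for $e\lambda$ is compatible with the strong-internal-set formalism of Definition (Strong Internal Sets), so that $U_{e_i\alpha_i}$ is again a strong internal set and $e_i\cdot U_\lambda$ and $e_i\cdot U_{\alpha_i}$ are compared inside the relativized ring $e_i\widetilde{\mathbb R}_f$. Once the relativization of Theorem~\ref{internoforte} to $e_i\widetilde{\mathbb R}_f$ is in hand — which is immediate, since invertibility in $e\widetilde{\mathbb R}_f$ is detected on the indices where $e$ does not vanish — everything else is a direct transcription of the finite-range local chart construction from the proof of the previous theorem, and the corollary follows.
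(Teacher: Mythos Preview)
Your proposal is correct and follows essentially the same approach as the paper: the paper does not give a separate proof of this corollary but states explicitly that ``the proof of the theorem gives us the following corollary,'' and your plan is precisely to unpack that proof by partitioning ${\cal A}_0$ according to the finite range of $\lambda$, taking $e_i=\chi_{S_i}$, and invoking the interleaving decomposition $\phi_\beta\circ\phi_\lambda^{-1}=\sum_i e_i f_i$ already written down there. Your flagged bookkeeping concern about whether each $\chi_{S_i}$ is a genuine idempotent in ${\cal B}(\widetilde{\mathbb R}_f)$ is the only point requiring care, and the paper implicitly absorbs it into the construction; otherwise your argument matches the paper's intent exactly.
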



Let $M$ be a Riemannian manifold. By Whitney's Embedding Theorem $M$ may be embedded as sub-manifold in some Euclidean space. Applying the previous theorem, we set $M^* =\widetilde{M}_{cf}$. By construction we have that $ssupp(M^*)=M$.    By construction we also have that $M\subset M^*\subset \widetilde{M}$, the latter containing the infinities and the former, $M^*$,  containing infinitesimals.

\begin{corollary}
Let $M$ be a Riemannian manifold. There exists a ${\cal{G}}_f-$manifold $M^*$ such that $M$ is discretely embedded in $M^*$ and $ssupp(M^*)=M$.
\end{corollary}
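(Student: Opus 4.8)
\noindent{\bf Sketch of proof.} The plan is to reduce the statement to the theorem just proved about $\widetilde{M}_{c,f}$, the only new ingredients being Whitney's Embedding Theorem and the construction of a sufficiently tame atlas. First I would invoke the \emph{closed} form of Whitney's Embedding Theorem to realise $M$ as a closed smooth submanifold of some $\mathbb{R}^N$; giving $\mathbb{R}^N$ the Euclidean metric and $M$ the induced Riemannian metric changes neither the smooth structure nor the diffeomorphism type, and closedness yields $\overline{M}=M$ in $\mathbb{R}^N$ --- exactly what is needed to identify the essential support below. If $M$ is disconnected one argues on each component separately and takes the disjoint union, the dimension being locally constant by the Dimension Invariance Theorem.

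Next I would manufacture an atlas $\{(U_\alpha,\phi_\alpha)\}_{\alpha\in\Lambda}$ of $M$ satisfying the hypotheses of the preceding theorem: each $\phi_\alpha$ a diffeomorphism of $U_\alpha$ onto one \emph{fixed} ball $\Omega_0=B_r(0)\subset\mathbb{R}^n$ with $\phi_\alpha$ and $\phi_\alpha^{-1}$ both Lipschitz. Starting from any smooth atlas I would shrink each domain to a relatively compact coordinate ball; there the chart map and its inverse have derivatives bounded above and bounded away from $0$, hence are bi-Lipschitz, and a translation together with a dilation normalises the image to the common ball $B_r(0)$ (take $r=1$). Feeding this atlas into the preceding theorem shows that $M^{*}:=\widetilde{M}_{c,f}\subset\widetilde{\mathbb{R}}^{N}_{f}$ is a ${\cal{G}}_{f}$-manifold of dimension $n$, each chart being an isometry for the sharp topologies by the corollary following that theorem.

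I would then verify the two claims. For the support: the remark preceding the statement gives $ssupp(\widetilde{M}_{c,f})=\overline{M}$, and since the Whitney embedding was chosen closed, $\overline{M}=M$, so $ssupp(M^{*})=M$. For the discrete embedding: each $q\in M\subset\mathbb{R}^N$ gives the constant net $(q)_\varphi$, which is compactly supported and has all entries in $M$, so $q\mapsto[(q)_\varphi]$ is a well-defined injection $M\hookrightarrow M^{*}$; and if $q_1\neq q_2$ then $q_1-q_2$ is a nonzero real vector, whence, by the norm properties recorded in the Extended Fundamental Theorem (in particular $\|rx\|=\|x\|$ for $r\in\mathbb{R}^{*}$, so $\|q_1-q_2\|=1$), the two images are at sharp distance exactly $1$. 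Thus the image of $M$ is $1$-separated, i.e.\ a (uniformly) discrete subset of $M^{*}$, and by construction $M\subset M^{*}\subset\widetilde{M}$, with $\widetilde{M}$ hosting the infinities and $M^{*}$ the infinitesimals.

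The atlas construction is routine; the one point requiring care is the bi-Lipschitz condition, where one must compare the Riemannian distance on $M$ with the ambient Euclidean distance --- precisely the comparison $\|p-q\|\leq dist_M(p,q)\leq C\|p-q\|$ on compacta supplied by Lemma A.1 of \cite{vernaeve2011isomorphisms} and already used in the corollary above, so no genuine obstacle remains. The only conceptual caveat worth flagging is that ``discretely embedded'' means the image is a discrete subset of the sharp topology of $M^{*}$, not that the inclusion is a homeomorphism of $M$ with its manifold topology; it is the $1$-separation, not continuity, that is being asserted.
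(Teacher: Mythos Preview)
Your proposal is correct and follows essentially the same approach as the paper: invoke Whitney to realise $M$ as a submanifold of some $\mathbb{R}^N$, apply the preceding theorem to obtain $M^*=\widetilde{M}_{c,f}$, and read off the support and the discrete embedding from the constructions already in place. You are in fact more careful than the paper on two points it leaves implicit --- using the \emph{closed} Whitney embedding so that $\overline{M}=M$ reconciles the earlier statement $ssupp(\widetilde{M}_{c,f})=\overline{M}$ with the claimed $ssupp(M^*)=M$, and you explicitly manufacture the bi-Lipschitz atlas with common target ball that the preceding theorem requires as a hypothesis.
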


Let $f\in {\cal{C}}^{\infty}(M,\mathbb{R})$ and define $\iota(f)$ on $M^{*}$ such that its local expression on the local chart  $(U_\lambda,\phi_\lambda)$ of $M^{*}$  is given by  $f\circ \phi_\lambda^{-1}([(p_{\varphi})_{\varphi}])=[(f\circ\varphi^{-1}_{\lambda(\varphi)}(p_{\varphi}))_{\varphi}]$.   Since $f$ is differentiable, the  local expressions of $\iota(f)$ are differentiable functions and hence, as defined following classical Differential Geometry, it follows that  $\iota(f)\in {\cal{C}}^{\infty}(M^*,\widetilde{\mathbb{R}}_f)$.  Consequently, 

$$\iota :  {\cal{C}}^{\infty}(M,\mathbb{R}) \longrightarrow {\cal{C}}^{\infty}(M^*, \widetilde{\mathbb{R}}_f)$$

\noindent is an $\mathbb{R}$-algebra monomorphism.  

Let $M$ be an orientable Riemannian manifold. We now relate our construction with the construction of \cite{gkos4, invariant}, where the authors define the basic space of generalized scaler fields on $M$  and  denote it by $\hat{{\cal{E}}}(M)$. The subset of moderate elements is denoted by $\hat{{\cal{E}}}_m(M)$ and the   algebra of generalized functions on $M$ is denoted by $\hat{{\cal{G}}}(M)$. This is done in the setting of the full algebras which happens to be our setting too (not the invariant though). We observe   that ${\cal{D}}(M)$ consists of  forms but this does not invalidate what comes next.   Our next result extends the  Embedding Theorem proved in  \cite{aragona2005discontinuous} (see previous sections).


\begin{theorem} {\bf{[Embedding Theorem]}} Let $M$ be an $n-$dimensional orientable  Riemannian manifold. There exists an $n-$dimensional ${\cal{G}}_f-$manifold $M^*$  and an algebra monomorphism 

$$\kappa  :\hat{{\cal{G}}}(M)\longrightarrow {\cal{C}}^{\infty}(M^*,\widetilde{\mathbb{R}}_f)$$

\noindent which commutes with derivation. Moreover, $ssupp(M^*)=M$ and  equations defined on $M$,  whose data have singularities or nonlinearities,    naturally extend to equations on $M^*$ and, on $M^*$,  these data become $\ {\cal{C}}^{\infty}-$functions.

\end{theorem}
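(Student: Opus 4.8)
The plan is to transport moderate representatives of generalized scalar fields on $M$ through the charts of the ${\cal G}_f$-manifold $M^{*}=\widetilde{M}_{cf}$ built above, and then to verify, in turn, that the resulting object is a well defined $\widetilde{\mathbb R}_f$-valued function, that it is ${\cal C}^{\infty}$ in the sense of Section~4, that the assignment is a derivation-preserving $\mathbb R$-algebra morphism, and finally that it is injective. Fix a (closed) Whitney embedding $M\hookrightarrow\mathbb R^{N}$ and the atlas $\{(U_{\lambda},\phi_{\lambda}):\lambda\in\widetilde{\Lambda}\text{ of finite range}\}$ of $M^{*}$ produced above, whose local charts are isometries for the sharp topology and which satisfies $ssupp(M^{*})=M$. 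Given $u\in\hat{{\cal G}}(M)$ with a moderate representative $(u_{\varphi})_{\varphi\in{\cal A}_0}$, each $u_{\varphi}\in{\cal C}^{\infty}(M,\mathbb R)$, the definition I would adopt is to prescribe the local expression of $\kappa(u)$ on a chart $(U_{\lambda},\phi_{\lambda})$ by
\[
(\kappa(u)\circ\phi_{\lambda}^{-1})\bigl([(p_{\varphi})_{\varphi}]\bigr):=\bigl[(u_{\varphi}\circ\phi_{\lambda(\varphi)}^{-1}(p_{\varphi}))_{\varphi}\bigr],
\]
the evident extension of the monomorphism $\iota$ already defined on ${\cal C}^{\infty}(M,\mathbb R)$.

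The first task is well-definedness. By the corollary that a finite-range chart $(U_{\lambda},\phi_{\lambda})$ splits, through a complete system of mutually orthogonal idempotents of ${\cal B}(\widetilde{\mathbb R}_f)$, as an interleaving of principal charts, it suffices to treat $\lambda\equiv\alpha$ constant; there $\phi_{\alpha}^{-1}(p_{\varphi})$ stays in one coordinate patch, so that $(u_{\varphi}\circ\phi_{\alpha}^{-1}(p_{\varphi}))_{\varphi}$ is moderate by moderateness of $(u_{\varphi})$ together with the Lipschitz bounds on $\phi_{\alpha}$ and $\phi_{\alpha}^{-1}$ over compacta, and hence names a genuine element of $\widetilde{\mathbb R}_f$; replacing $(u_{\varphi})$ by a negligible representative of $0$ produces a net that is $0$ in $\widetilde{\mathbb R}_f$, so the class does not depend on the chosen representative of $u$. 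Independence of the chart representative $\lambda$ and agreement on overlaps are then immediate, because each $u_{\varphi}$ is a globally defined function on $M$ and so its value at a point of $M$ is insensitive to the chart, while the transition maps $\phi_{\beta}\circ\phi_{\lambda}^{-1}$ are ${\cal C}^{\infty}$ diffeomorphisms; the local pieces therefore glue to a well defined $\kappa(u):M^{*}\to\widetilde{\mathbb R}_f$.

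For smoothness together with the derivation property I would argue chartwise on a principal chart: for each fixed $\varphi$, $u_{\varphi}\circ\phi_{\alpha}^{-1}$ is a genuine ${\cal C}^{\infty}$ function on $\Omega_0=B_r(0)$ all of whose partial derivatives are moderate, since $u\in\hat{{\cal E}}_m(M)$; hence the Embedding Theorem for open subsets of $\mathbb R^n$ established in Section~4, applied on $\Omega_0$, shows that $\kappa(u)\circ\phi_{\alpha}^{-1}$ is differentiable in the sense of Section~4 and that $\partial^{\beta}$ commutes with passing to classes. Because transition maps are ${\cal C}^{\infty}$ diffeomorphisms, the chain rule makes this independent of the chart, so $\kappa(u)\in{\cal C}^{\infty}(M^{*},\widetilde{\mathbb R}_f)$ and, in the coordinates of any chart, $\kappa\circ\partial_i=\partial_i\circ\kappa$; equivalently $\kappa$ intertwines the Lie derivative along a classical vector field on $M$ with the Lie derivative along its image on $M^{*}$. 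That $\kappa$ is a unital $\mathbb R$-algebra morphism is then clear, since in each chart $+$, $\cdot$ and scaling are carried out representative-wise and agree with the operations of ${\cal C}^{\infty}(M,\mathbb R)$ and $\widetilde{\mathbb R}_f$.

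Injectivity I would deduce from the point-value characterization of $\hat{{\cal G}}(M)$ from \cite{gkos4}: $u=0$ exactly when $u$ vanishes at every compactly supported generalized point of $M$. By construction $M^{*}=\widetilde{M}_{cf}$ is precisely that set of points, and for $p=[(p_{\varphi})_{\varphi}]\in M^{*}$ with associated generalized point $\tilde p$ one computes, in any chart, $\kappa(u)(p)=[(u_{\varphi}(p_{\varphi}))_{\varphi}]=u(\tilde p)$; hence $\kappa(u)\equiv0$ forces $u(\tilde p)=0$ for all such $\tilde p$, i.e., $u=0$. The assertion $ssupp(M^{*})=M$ is the corollary established above, and the closing remark follows because ${\cal D}^{\prime}(M)$ embeds into $\hat{{\cal G}}(M)$ in the full setting: an equation on $M$ whose data (coefficients, sources, nonlinear products) are distributional or otherwise singular becomes, after applying $\kappa$, an equation on $M^{*}$ all of whose data are ${\cal C}^{\infty}$ $\widetilde{\mathbb R}_f$-valued functions, and a generalized solution on $M$ is carried to a ${\cal C}^{\infty}$ solution on $M^{*}$. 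The step I expect to be the genuine obstacle is the well-definedness and moderateness bookkeeping of the second paragraph, and more precisely the need to pin down the exact correspondence between the diffeomorphism-invariant moderateness and negligibility defining $\hat{{\cal G}}(M)$ and the ${\cal A}_0$-indexed moderateness governing $\widetilde{\mathbb R}_f$ and ${\cal C}^{\infty}(M^{*},\widetilde{\mathbb R}_f)$, while correctly accommodating the interleaving carried by finite-range charts; once that is in place, the remaining steps are essentially a manifold-wise repackaging of the Section~4 Embedding Theorem.
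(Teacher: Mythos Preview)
Your proposal is correct and follows precisely the route the paper indicates: the paper in fact skips the proof, saying only that it ``can be obtained from what we just saw and results of \cite{gkos4, invariant} on $\hat{{\cal E}}_m(M)$'' and that $\kappa$ restricts to $\iota$ on ${\cal C}^{\infty}(M,\mathbb R)$. Your sketch is exactly the unpacking of that sentence --- define $\kappa$ chartwise as the extension of $\iota$, reduce to principal charts via the idempotent decomposition from the corollary, invoke the Section~4 Embedding Theorem on each $\Omega_0$, and appeal to the point-value characterization from \cite{gkos4} for injectivity --- and the obstacle you flag (matching the diffeomorphism-invariant moderateness of $\hat{{\cal G}}(M)$ with the ${\cal A}_0$-indexed growth conditions of $\widetilde{\mathbb R}_f$) is precisely the bookkeeping the paper leaves to the cited references.
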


We skip the proof since it can be obtained from what we just saw  and  results of \cite{gkos4, invariant} on $\hat{{\cal{E}}}_m(M)$. The restriction of $\kappa$ to ${\cal{C}}^{\infty}(M,\mathbb{R})$ is $\iota$.  Some classical results can be readily extended: A differentiable map   $g : M\longrightarrow N$   between manifolds induces an  algebra homomorfism $\Phi(g): {\cal{C}}^{\infty}(N^*,\widetilde{\mathbb{R}}_f)\longrightarrow {\cal{C}}^{\infty}(M^*,\widetilde{\mathbb{R}}_f)$. Another point in our approach is that the topologies of the algebras and the  ${\cal{G}}_f-$manifolds involved have  the same underlying reals,  namely $\widetilde{\mathbb{R}}_f$. Note however that in our approach  some infinitesimals live in   $M$,   but  infinities  live in $\widetilde{M}$.   Function which are infinities do live in the algebras constructed.

We established that in  the setting of Colombeau algebras on manifolds the perspective  can be reduced to a  setting very much alike  to the settings of  Classical Differential Geometry which we coined Generalized Differential Geometry.  The  difference being that in the former, the basic underlying structure is   $\mathbb{R}$  and in the former it is $\widetilde{\mathbb{R}}_f$. In the complex case $\mathbb{R}$ should be changed by $\mathbb{C}$. The topological ring $\widetilde{\mathbb{R}}_f$ behaves much alike $\mathbb{R}$ in the sense that an element of  $\widetilde{\mathbb{R}}_f$ is either a unit or a zero divisor and its  group of invertible elements, $Inv(\widetilde{\mathbb{R}}_f)$,  is open and dense in $\widetilde{\mathbb{R}}_f$. The structure of $\mathbb{R}$ is well known as is the structure of $\widetilde{\mathbb{R}}_f$. So our approach gives researchers from other field the possibility to apply the theory without having to  dive  into the complicated details of its construction. Basically all one needs is to get acquainted with  $\widetilde{\mathbb{R}}_f$: its ideal structure, idempotents and group of units. We refer the  interested reader  to  \cite{aragona2013algebraic, AJ,   ver1}.

It follows from the embedding theorem above  that certain  problem on $M$ involving distributions and their products or involving certain singularities can be lifted to  problems on $M^*$ involving only  ${\cal{C}}^{\infty}-$functions defined on $M^*$.  For all the environments  involved one can define the support of their elements,  integration, membranes,  internal sets and  obtain topological results. In particular, one can prove that ${\cal{D}}^{\prime}(M)$ is discretely embedded in  $\hat{{\cal{G}}}(M)$ and  that  for   $T\in {\cal{D}}^{\prime}(M)$ and  $\varphi \in{\cal{D}}(M)$  the following equality holds also  in  $\widetilde{\mathbb{R}}_f$ (see previous sections). 

$$\langle T\mid \varphi \rangle =\int_M \kappa(T)\varphi $$

  We now define one more notion of derivation, the notion of  G\^ateau  and Fr\'echet differentiability in the full environments.    Let $\mathbb{K}\in \{\mathbb{R},\mathbb{C}\}$,  $X$ and $Y$ be     topological $\widetilde{\mathbb{K}}_f-$modules and let  $\Omega \subset X$ be an   open subset.  A function $F :\Omega \longrightarrow Y$   is said to be $\widetilde{\mathbb{K}}_f-${\it G\^ateau-differentiable} in $x\in int(\Omega)$, the set of  interior points of $\Omega$,  if  there exists  a   
   $\widetilde{\mathbb{K}}_f-$linear   map $DF (x) : X\longrightarrow Y$ such that  

$$\lim_{t\longrightarrow 0}\frac{F(x+\alpha^\bullet_{-ln(t)}u)-F(x)}{\alpha^\bullet_{-ln(t)}}=DF(x)u, \ \forall \ u\in X$$
 
\noindent Such a differentiable map is said to be  $\widetilde{\mathbb{K}}_f-${\it Fr\'echet-differentiable} in $ x$ if 

$$\lim_{\|u\| \longrightarrow 0}\frac{F(x+u)-F(x)-DF(x)u}{\alpha^\bullet_{-ln(\|u\|)}}=0, \ \forall \ u\in X$$
   	 
	  It is easy to see that the  latter   definition implies the former  and also the
continuity of the map $F$. If the map $DF : X \longrightarrow  {\cal{L}}(X,Y),$ 
 that associates each $x$ with $DF(x)$ is continuous then we say that F is of class ${\cal{C}}^1$ or continuously $\widetilde{\mathbb{K}}_f-$Fr\'{e}chet-differentiable.
 This extends Generalized Differential Calculus to $\widetilde{\mathbb{K}}_f-$modules  making possible  applications in what we  coin {\it Generalized Variational Calculus}.  The  development and applications of these ideas in all environments   will appear elsewhere.    We finish this section looking at events in  Generalized Space-Time and generalized solutions of equations. {\it We do not claim that what follows next corresponds to physical reality. Conclusion are solely based on our interpretation of our model}.

 Let $M$ be     four dimensional Classical Space-Time. Since $M$ is curved, it follows by  the Nash Embedding Theorem and Whitney's Embedding Theorem that   there exists a smallest  $n\in\{5,6,7, 8, 9\}$ such that    $M$ can be isometrically  embedded into   $\mathbb{R}^n$.  Define    Generalized Space-Time to be the ${\cal{G}}_f-$manifold   $M^*$    associated to $M$. We have that $M$ is discretely embedded in $M^*$ and $M\subset M^*\subset \overline{B}_1(0) \subset\widetilde{M}\subset  \widetilde{\mathbb{R}}^n$, with $ssupp(M^*)=M$. The infinitesimals live in $M^*\cap B_1(\vec{0})$ and the infinities live in $\widetilde{M}\cap  (\overline{B}_1(\vec{0}))^c\subset \widetilde{\mathbb{R}}^n$. 
 
 If $p\in M^*$ is an infinitesimal such that  $\|p\|_2\notin Inv(\widetilde{\mathbb{R}})$,  then   there exist $e,f\in {\cal{B}}(\widetilde{\mathbb{R}})$  satisfying  $e\cdot p=\vec{0}$ and $f\cdot\|p\|_2\in Inv(f\cdot\widetilde{\mathbb{R}})$. Given $p_1\in M^*$,  the interleaving $ ep_1+(1-e)p=ep_1+p\in M^*$. So one can interleave two events (points) without, possibly, modifying at least one of them. Once interleaved, they become a single event in $M^*$. In general, an interleaving is of the form  $\sum\limits_j e_j\cdot x_j$, where $x_j\in \widetilde{\mathbb{R}}^n$ and $e_j\in {\cal{B}}(\widetilde{\mathbb{R}})$, $e_i\cdot e_j =\delta_{ij}e_i$ and $\sum\limits_j e_j =1$,  where $\delta_{ij}$ is Kronecker's delta function.  A generalized transition probability $\nu(e_j)$  is associated to each $e_j$  and  $\sum\limits_j \nu(e_j) =1$.    If measuring on $M$ corresponds to applying the function $F$,  then $F( \sum\limits_j e_j\cdot x_j)=\sum\limits_j e_j\cdot F( x_j)$ is again an interleaving.   For example,   let $T=x\delta$,    $x_0, x_j\in\mathbb{R}$, $e_j\in {\cal{B}}(\widetilde{\mathbb{R}})$,  $T_j(x)=T(\frac{x_j}{x_0}x)$   and consider the interleaving  $F=\sum\limits_j T_je_j$. Then $\kappa(T)(x_0\cdot \alpha)=x_0\rho(x_0)$ and $\kappa(F)(x_0\cdot \alpha)=\sum\limits_j \kappa(T_j)(x_0\cdot \alpha)e_j=\sum\limits_j \kappa(T)(x_j\cdot\alpha)e_j=\sum\limits_j x_j\rho(x_j)e_j$ showing  that   an infinitesimal  can produce   a {\it simultaneous  interleaved  effect}  at   points  $x_j$ at arbitrary classical distances  from $\ x_0\alpha\in halo(0)$ (see also \cite{juriaans2022fixed}).        The product of the infinitesimal   $x_0\alpha$  and the infinity $\delta(x_0\alpha)$,   can be  considered  {\it the collapse of the infinity} or  {\it the surge of the infinitesimal}.   Summing the interleaving $x=e_1x_1+e_2x_2$ and $y=f_1y_1+f_2y_2$ results in the interleaving $e_1f_1(x_1+y_1)+e_1f_2(x_1+y_2)+e_2f_1(x_2+y_1)+e_2f_2(x_2+y_2)$, a superposition.   So in $M$ interleaving and their sums are perceived like waves.     Motion of objects in $M^*$ is captured by hypersequences,  which    can be considered to be  displacement with choices,  made along the displacement,  leading to the same point in case of convergence.

  A certain set of idempotents$ -$ which maybe are linked to a certain set of automata $-$ in  $M^*$ seems to  determine  phenomenon $-$ which are the results of interleaving $-$  in  $M$.   That what is not interleaved cannot be observed until it is interleaved.    Since there is a fixed distance between points in  $M$ $-$ it is a grid of equidistant points $-$ it is the time interacted   interleaving between  points of  $M$, that causes the illusion of  distances between classical points to emerge.  Time, being  partially ordered and  having the possibility of taking infinitesimal values, is event dependent. It results in flows in $M^*$.  When objects are observed, this causes   the multiplication of  the observed objects  with an interleaving of infinities and infinitesimals, resulting in new interleaving and possibly modifying the objects.  Transition probabilities, determined by observations and choices, decide how interleaving are formed.  Things in $M$ become meaningful and interact, i.e.,  $M$  expands and attraction occurs, as interleaving are formed. Consequently, in $M^*$  observations and choices are  the driving forces  behind spacetime expansion in $M$ and it is interleaving that results in gravity.     In case $\mathbb{K}=\mathbb{C}$ other interpretations arise, in particular, when this involves infinitesimals.    For a general $M$, managing  to control the inputs in events in $M^*$  and  to direct  the outputs in $M$ might  be  a   tool  to be used  and indicate  a way to  glance into the unseeable, thus  making possible the confirmation of its existence.

   Consider a flow   $F$  in   $M^*$.  In some parts of  $F$  some of its  elements   can behave  as infinitesimals    resulting in some  probability of  their rendezvous with infinities in $\widetilde{M}$.  Taking   a small  open   generalized space-time volume $\Delta$ containing  the rendezvous, there will be some probability that one observes a simultaneous  interleaved effect, caused by the collapse of infinities in $\Delta$,   resulting in  points contained  in $\Delta\cap (F\cap M)$ and, hence,  in   flow patterns of $\Delta \cap F\cap M$ that exhibit chaotic and unpredictable behavior.  The unpredictability  depends  on the    generalized  transition probabilities  of the idempotents of the interleaving involved   and the  type of infinitesimals  in $\Delta$ and infinities in $\widetilde{M}$. Instantaneousness may also occur since it is produced by the observation in $M$ of the collapse of  infinities in $M^*$.
    
   Suppose  that one  interleaves two events $F_1$ and $F_2$ in $M^*$  so that they are contained in   a small  open   generalized space-time volume $\Delta$   stretch in the classical space direction. The creation of infinitesimals at  $F_1\cap M$,  one end of the space-stretch,  may result in  the observation of  an instantaneous  simultaneous  interleaved  effect  at $F_2\cap M$,    the other end of the space-stretch. No contradiction arrises  in the instantaneousness,   since    $M$  consists of a grid of equidistant points  in   $M^*$     all  resulting from   the collapse of   infinities.  In $M^*$, no points of $M$ are far away.

   Other phenomena can be the result of the fact that the support of a generalized event (or generalized solution of an  equation) has more than one point in its support. For certain differential equations this may  lead to different  numerical solutions depending on the numerical refinement or numerical grid being used in the calculations.  Numerical grids  correspond to idempotents and the numerical solution given by   each grid  corresponds  to the product of the generalized solution and the corresponding grid-idempotent.     For such equations, only very fine tuning on grids, which is highly improbable,  will result in the same idempotent.  This suggest that the support of the generalized solution of such an equation   has cluster  points.    This may indicate  a way  to confirm the existence of these generalized environments.   References on which this section is based are \cite{aragona1991intrinsic, aragona2005discontinuous, aragonamembranas, aragona2013algebraic, JJO,  https://doi.org/10.48550/arxiv.1706.02810, garetto2011hilbert, strong, gkos4, invariant, Paulo,  juriaans2022fixed,   ku, ku4,  ros,  oberguggenberger2008internal}.

\section{Conclusion}

The generalized reals were introduced  and   milieus were constructed   resulting in   a notion of Calculus  which naturally extends Newton's and Schwartz's  Calculus,  the main difference being   that classically we stop measuring at scale $\alpha$,  the  natural gauge.    Generalized Differential Geometry is introduced  resulting in  the construction of Generalized Space-Time, which we do not claim corresponds to physical reality.    Below  a certain distance in Generalized Space-Time,  points in Classical Space-Time  cease to exists, the latter becomes fussy,   uncertainty of measurements becomes the rule  and reality becomes sequential, i.e., depending on a history of (infinitely) many interleaved events. As infinities have the same nature, their  rendezvous appears to cause,  seemingly unpredictable,  spooky  interleaved events.  Such  encounters can be considered as the  collapse of the infinities as time  takes infinitesimal values.  For example, the collapse of Dirac's infinity is illustrated by the function $f(t)=t\delta(t)=t_0\rho(t_0)$, where $\delta(t)$  is the speed,  at    $t=  t_0 \alpha\in   \mathbb{R}\cdot \alpha\subset halo(0)$, a grid of equidistant points in Generalized  Time.   The solution $u(t,x)=\frac{x}{t+\alpha}$ of the equation $u_t +uu_x=0, u(0,x)=x\alpha^{-1} $ satisfies  $u(t_0\alpha,x_0\alpha)=\frac{x_0}{1+t_0}\in \mathbb{R}$.  For  $w(t)=     arctan(\alpha^{-1}t) $ we have  $ w^2(t_0\alpha^2)\in halo(0)$,  but  $(w^2)^{\prime}(t_0\alpha^2)\in halo(2t_0)$. Hence,   in infinitesimal time     classical measurements    can be arbitrarily  big.    Consequently, phenomena in Generalized Space-Time  can effect     Classical Space-Time in several  counterintuitive ways, being turbulence, spookiness         and the illusion of distances some of the effects.     The possible dependence on the grid used  to find  numerical solutions   of   certain differential equations is  considered, suggesting an explanation   in terms of the generalized solutions of  these equations, the support of these generalized solutions and grid-idempotents. Finally, the stepping stone of  generalized variational calculus is laid.    Our proposal, centered around the notions of  idempotents, interleaving and support,  is for researchers   to consider  these generalized milieus and concepts  as they might  give  insights concerning questions in  classical environments and provide alternative  mathematical tools.   It might be however that, in terms of computation, Classical Calculus and Generalized Calculus are equivalent, which should explain the enormous  success of Classical Calculus even though infinitesimals and infinities are absent in the classical realm.  However,  more mathematical  effort may be needed and less clarity of  why's might  occur when using  Classical Calculus to describe  physical reality. 
   
\vspace{0.5cm}

\noindent {\bf{Acknowledgements:}} Most parts of this paper stem from  the second author's Ph.D.  thesis done under the guidance of the first author. He is grateful to  IME-USP and the Centro de Ci\^{e}ncias de Balsas-UFMA for their hospitality and the possibility to carry out his Ph.D. as programmed.


\end{document}